\newtheorem{theorem}{Theorem}
\newtheorem{lemma}[theorem]{Lemma}
\newtheorem{definition}[theorem]{Definition}
\newtheorem{corollary}[theorem]{Corollary}
\numberwithin{equation}{section}
\newcommand{\Z}{\mathbb{Z}}
\newcommand{\R}{\mathbb{R}}
\newcommand{\C}{\mathbb{C}}
\newcommand{\Sc}{\mathcal{S}}
\newcommand{\I}{\mathfrak{I}}
\newcommand{\tr}{\operatorname{tr}}
\newcommand{\supp}{\operatorname{supp}}
\newcommand{\real}{\text{Re}}
\title{A priori estimates for a quadratic dNLS}
\author{Friedrich Klaus}
\begin{document}

\maketitle

\begin{abstract}
    In this work we consider integrable PDE with higher dimensional Lax pairs. Our main example is a quadratic dNLS equation with a $3 \times 3$ Lax pair. For this equation we show a-priori estimates in Sobolev spaces of negative regularity $H^s(\R), s > -\frac12$. We also prove that for general $N \times N$ Lax operators $L$, the transmission coefficient coincides with the $2$-renormalized perturbation determinant. 
\end{abstract}

\section{Introduction}

In this work we consider integrable PDE with higher dimensional Lax pairs. Our main example is the quadratic dNLS, given by
\begin{equation}\label{eq:qdNLS}
    iq_t + \frac1{\sqrt{3}}q_{xx} + 2i\bar q \bar q_x = 0.
\end{equation}
which admits a Lax pair with a $3\times 3$ matrix Lax operator, see \eqref{eq:L}. Here, $q(t,x): I \times \R \to \C$ is a complex-valued function defined on the real line. We are interested in the initial-value problem for \eqref{eq:qdNLS} and impose the initial condition
\[
    q(0,\cdot) = q_0 \in H^s(\R).
\]
Equation \eqref{eq:qdNLS} was brought to the author's attention when reading the very interesting work \cite{charlierlenells} by Charlier--Lenells on a Miura map relating \eqref{eq:qdNLS} to the good Boussinesq equation. While he is not aware of any physical motivation to study \eqref{eq:qdNLS}, and his own motivation is purely mathematical, he is quite sure that it can be used as well to study wave propagation with a quadratic interaction.

From the scaling invariance
\[
    q_{\lambda}(t,x) = \lambda q(\lambda^2 t, \lambda x),
\]
which maps a solution $q$ of \eqref{eq:qdNLS} to another solution $q_\lambda$, we see that the scaling critical Sobolev exponent for this equation is
\[
    s_c = -\frac 12.
\]
This suggests that we may expect wellposedness of \eqref{eq:qdNLS} in Sobolev spaces $H^s(\R)$ with $s > -\frac12$. 

To the best of the author's knowledge the strongest result proving local and global wellposedness is the one by Grünrock \cite{gruenrock} for $q_0 \in L^2$, both on the real line and on the torus. This makes the wellposedness situation very similar to the one of cubic NLS before the seminal works of Killip--Visan--Zhang \cite{kvz}, Koch--Tataru \cite{kt} and Harrop-Griffiths--Killip--Visan \cite{hgknv}. They lowered the regularity threshold to $s > -\frac12$, whereas $L^2$ wellposedness was known for a long time. A similar situation was given for the Benjamin-Ono equation, with a-priori estimates being proven in \cite{talbut} and the very recent sharp wellposedness result \cite{klv}. 

The first step on the way to sharp wellposedness in these examples was the construction of conserved quantities at the level of negative Sobolev regularities. This is also the first main result of this work, yielding low regularity a-priori estimates for the equation \eqref{eq:qdNLS} (see Theorem \ref{thm:main1} for the full statement):
\begin{theorem}\label{thm:main1small}
    Given a Schwartz solution $q$ of \eqref{eq:qdNLS}, for all $-\frac12 < s < 0$,
    \begin{equation}
        \|q(t)\|_{H^s} \leq c(1+\|q(0)\|_{H^s})^{\frac{-s}{1+2s}}\|q(0)\|_{H^s}.
    \end{equation}
\end{theorem}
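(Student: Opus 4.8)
The plan is to extract from the integrable structure a one-parameter family of conserved quantities and to show that, after integrating against a suitable weight in the spectral parameter, this family reproduces the $H^s$ norm up to controllable errors. Concretely, for the Lax operator $L=L(q;\kappa)$ from \eqref{eq:L} I would first set up the direct scattering problem for Schwartz $q$ and isolate the transmission coefficient $T(\kappa)$, which is invariant under the flow \eqref{eq:qdNLS}. Invoking the identification of $T(\kappa)$ with the $2$-renormalized perturbation determinant (the second result announced in the abstract), I obtain a conserved quantity $\alpha(\kappa;q):=\log T(\kappa)$ (up to sign and normalization) that is well defined for large real $\kappa$ at the regularity $H^s$, $s>-\tfrac12$, precisely because the $2$-renormalization only requires the Hilbert--Schmidt, i.e. quadratic, integrability of the relevant resolvent-dressed multiplication operator.

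The next step is to expand $\alpha(\kappa;q)$ as a convergent series $\alpha(\kappa;q)=\sum_{\ell\ge 2}\alpha_\ell(\kappa;q)$, with $\alpha_\ell$ homogeneous of degree $\ell$ in $(q,\bar q)$. I would compute the quadratic term explicitly, obtaining $\alpha_2(\kappa;q)=\int_\R m_\kappa(\xi)\,|\hat q(\xi)|^2\,d\xi$ for an explicit multiplier $m_\kappa$ read off from the $3\times 3$ resolvent kernel; one expects $m_\kappa(\xi)\sim (\kappa^2+\xi^2)^{-1}$ up to constants, so that the weighted integral $\int_0^\infty \kappa^{2s+1}\alpha_2(\kappa;q)\,d\kappa$ is comparable to $\|q\|_{\dot H^s}^2$ for $-\tfrac12<s<0$. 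The crucial analytic input is then a uniform estimate on the higher-order terms, showing that for $\ell\ge 3$ the contributions $\alpha_\ell(\kappa;q)$ are bounded by a geometric series in $\kappa^{-\delta}\|q\|_{H^s}$ for some $\delta>0$, so that they become negligible compared with $\alpha_2$ once $\kappa$ is large relative to $\|q\|_{H^s}$.

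With the expansion in hand I would prove the two-sided coercivity estimate
\begin{equation*}
    \|P_{\ge\kappa_0}q\|_{\dot H^s}^2 + \kappa_0^{2s}\|P_{\le\kappa_0}q\|_{L^2}^2 \;\lesssim\; \int_{\kappa_0}^\infty \kappa^{2s+1}\alpha(\kappa;q)\,d\kappa \;\lesssim\; \|q\|_{H^s}^2 ,
\end{equation*}
valid for $\kappa_0$ exceeding a threshold $\kappa_\ast$ determined by the remainder estimate of the previous step. Since $\langle\xi\rangle^{2s}\le 1$ for $s<0$, multiplying the left-hand side by $\kappa_0^{-2s}\ge 1$ dominates $\|q\|_{H^s}^2$; the middle quantity is conserved, and evaluating it at $t=0$ gives the bound $\lesssim\|q(0)\|_{H^s}^2$ uniformly in $\kappa_0\ge 1$. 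Combining these yields $\|q(t)\|_{H^s}^2\lesssim \kappa_0^{-2s}\|q(0)\|_{H^s}^2$ for every admissible $\kappa_0\ge\max(1,\kappa_\ast(t))$.

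Finally, optimizing by choosing $\kappa_0\sim\kappa_\ast(t)$ and feeding the result back through the threshold relation $\kappa_\ast(t)\sim\|q(t)\|_{H^s}^{\sigma}$ produces the stated polynomial loss, the precise power $\frac{-s}{1+2s}$ being forced by the scaling relation between $\kappa$ and the $H^s$ norm. Equivalently, one may run a scaling argument: rescale $q_0$ to $q_\lambda$ to enter the small-data regime $\|q_{0,\lambda}\|_{H^s}\le\epsilon_0$, in which the family $\alpha(\kappa;\cdot)$ is comparable to the $H^s$ norm, apply conservation there, and scale back; the mismatch between the homogeneous scaling of \eqref{eq:qdNLS} and the inhomogeneous $H^s$ norm is exactly what generates the factor $(1+\|q(0)\|_{H^s})^{\frac{-s}{1+2s}}$. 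I expect the main obstacle to lie in the higher-order and coercivity steps for the $3\times 3$ operator: controlling the resolvent kernel of the $3\times 3$ Lax operator and closing the multilinear bounds on $\alpha_\ell$ all the way down to $s>-\tfrac12$ is substantially more delicate than in the $2\times 2$ AKNS setting, and arranging the $2$-renormalization to interact cleanly with the quadratic derivative nonlinearity $\bar q\bar q_x$ is where the real work will be.
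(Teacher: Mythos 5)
Your proposal follows essentially the same route as the paper: conservation of $\log T$ transferred to the $2$-renormalized perturbation determinant via the identification theorem, series expansion with a coercive quadratic term and geometrically small higher-order terms under a largeness condition on the spectral parameter, integration against the weight $\kappa^{2s+1}$ (equivalently the paper's $k^{2s}$ against $A(k,q)$, whose quadratic term carries the extra factor of $k$) to produce a conserved quantity comparable to the microlocal norm $\|q\|_{H^s_{k}}$, and finally conversion to the $H^s$ norm with a polynomial loss dictated by how large $k$ must be relative to $\|q(0)\|_{H^s}$. The only cosmetic differences are in the endgame bookkeeping (your threshold-optimization/scaling argument versus the paper's fixed choice of $k$ depending on $\|q(0)\|_{H^s}$ propagated by a continuity argument), which amount to the same thing.
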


Whether or not wellposedness of \eqref{eq:qdNLS} can be proven in negative regularity similar to \cite{annals,hgknv} is an interesting remaining question (see also Section \ref{sec:questions}). The existence of weak solutions in negative regularity follows from Theorem \ref{thm:main1small} by a compactness argument similar to \cite{kkl}.

To construct the conserved quantities for \eqref{eq:qdNLS} we use its complete integrability. In this situation there are two equivalent formalisms which were presented in the works of Koch--Tataru \cite{kt} and Killip--Visan--Zhang \cite{kvz} for the AKNS hierarchy.

The ansatz of Koch--Tataru \cite{kt} is very classical: they construct the Jost solutions to define the inverse transmission coefficient $T^{-1}$, one of the novelties being their use of Banach spaces of functions of $p$-bounded variation to treat low regularities. Due to the structure of the compatibility condition of AKNS, the transmission coefficient is easily seen to be a conserved quantity. After constructing and showing bounds on the transmission coefficient, one can then analyze its logarithm. By establishing a certain shuffle algebra structure which permits to prove estimates with sharp decay in the spectral parameter, conserved quantities are constructed by integrating $-\log T$ against a weight in the complex plane.

In comparison to the technical but classical machinery in \cite{kt}, the formalism developed in \cite{kvz} is more elegant: Killip--Visan--Zhang start with a certain representation of the transmission coefficient in terms of a (renormalized) Fredholm determinant. Now Jacobi's formula for the logarithm of the determinant allows to get rid of the shuffle algebra used in \cite{kt}, and many estimates on $\log T$ simply reduce to estimates of operators in Schatten class norms. Then again, conserved quantities are constructed by integrating the Fredholm determinant against a weight, respectively by performing a weighted summation.

There is one drawback of the formalism in \cite{kvz} though: one either has to show conservation of the renormalized perturbation determinant by hand, or know that it equals $T^{-1}$ for which conservation is much simpler to prove. In \cite{kvz} the conservation was shown by hand, and it was also shown by hand in the similar situation for Benjamin-Ono \cite{talbut} and dNLS \cite{klaus}. This already indicates one issue: even though the equality of the renormalized Fredholm determinant and the inverse transmission coefficient are mostly assumed to be known (e.g. for dNLS in \cite{bahouri}), apart from rigorous treatments of their equality for KdV (see \cite{jostpais} and \cite[Proposition 5.7]{traceideals}), references for their equality in general higher dimensional systems are harder to find (compare also to \cite{largedataec} where a proof of their equality for dNLS was included). 

On the other hand, showing the conservation of the Fredholm determinant by hand can be a cumbersome task. In fact, for \eqref{eq:qdNLS} the author started with the representation given in \eqref{eq:logdettrace} but struggled to prove its conservation using commutator identities similar to \cite{kvz,klaus}. This is why instead we prove our second main result (see Theorem \ref{thm:main2} for a full statement and Section \ref{sec:equality2} for definitions): 

\begin{theorem}\label{thm:main2small}
    Consider the first order system
    \begin{equation}\label{eq:firstordersystem1}
        (\partial_x - U) \Phi =  0,
    \end{equation}
    where $U = kJ + U_0(u)$, $k > 0$, and
    \begin{enumerate}
        \item $J = \operatorname{diag}(\omega_1, \dots, \omega_n)$, $\omega_j \notin i\R$ are distinct, $\tr(J) = 0$,
        \item $U_0(u)$ is an off-diagonal matrix of polynomials in $u = (u_1,\dots,u_m)$ without zeroth order terms.
    \end{enumerate}
    Then if $\Phi_1^-, \dots, \Phi_1^l$ are the left Jost solutions and $\Phi_{l+1}^+, \dots, \Phi_{n}^+$ are the right Jost solutions, we have the equality
    \begin{equation}
        W\big(\Phi_1^-| \dots |\Phi_l^-|\Phi_{l+1}^+|\dots| \Phi_{n}^+\big) = {\det}_2\big(1-(\partial-kJ)^{-1}U_0(u)\big),
    \end{equation}
    where $W$ denotes the Wronskian and ${\det}_2$ the $2$-renormalized Fredholm determinant.
\end{theorem}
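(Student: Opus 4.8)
The plan is to expand both sides as convergent series in the potential $U_0$ and to match them order by order, with the $\det_2$-renormalization accounting precisely for the off-diagonality of $U_0$. First I would introduce the modified Jost solutions $m_j = e^{-k\omega_j x}\Phi_j$, each of which solves a conjugated first-order system whose Volterra integral form is built from the diagonal Green's operator for $\partial - kJ$; after ordering the $\omega_j$ by real part, the contour of integration for each component is dictated by the sign of $\operatorname{Re}(\omega_p - \omega_j)$, which yields existence and uniqueness via a Neumann series together with exponential off-diagonal decay of the kernels. As an operator on $L^2(\R;\C^n)$, the resolvent $G = (\partial - kJ)^{-1}$ is diagonal, its $j$-th block being the scalar resolvent of $\partial - k\omega_j$, which is bounded precisely because $\operatorname{Re}\omega_j \ne 0$. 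Since $U_0$ decays, $K := G\,U_0$ then has a bounded kernel with off-diagonal exponential decay, so $K \in \mathcal{I}_2$ (Hilbert--Schmidt) but in general not trace class; this is exactly why $\det_2(1-K)$, rather than $\det(1-K)$, is the correct object.

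For the left-hand side, I would first note that because $U_0$ is off-diagonal and $\tr J = 0$ we have $\tr U = 0$, so Liouville's formula makes the Wronskian $W$ constant in $x$. Writing $M^\pm$ for the full left and right Jost fundamental matrices and $S = (M^+)^{-1}M^-$ for the scattering matrix, the normalization forces $\det M^\pm \equiv 1$, hence $\det S = 1$. Substituting $M^- = M^+ S$ into the mixed Wronskian and using that the final $n-l$ columns lie along standard basis directions collapses $W$ onto the principal $l\times l$ minor $\det(S_{11})$, i.e.\ onto the transmission coefficient of the system.

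The heart of the matter is then the order-by-order identification. I would expand
\[
    \log{\det}_2(1-K) = -\sum_{j\ge 2}\tfrac1j\,\tr(K^j),
\]
in which each trace is a cyclic multiple integral of $G$ and $U_0$, and on the other side iterate the integral equation for $m^-$ to expand $\log\det(S_{11})$ as a multilinear series in $U_0$. The linear term vanishes on both sides: $\tr K$ is dropped by $\det_2$, and the first-order contribution to $\det(S_{11})$ vanishes because $U_0$ is off-diagonal, so $\int (U_0)_{pp}\,dy = 0$. For $j\ge 2$ the plan is to identify the cyclic trace $\tr(K^j)$ with the corresponding multilinear term of $-\log\det(S_{11})$ by matching Green's kernels to the iterated Jost integrals and performing the $x$-integrations, both series converging in the Hilbert--Schmidt norm of $K$.

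I expect the main obstacle to be precisely this combinatorial reorganization: the iterated Jost integrals live on ordered simplices with boundary data split across $\pm\infty$, and they must be reassembled into the fully cyclic trace integrals $\tr(K^j)$ with the matching factors $1/j$. Keeping careful track of which components integrate from which infinity, and hence which block of $S$ each term feeds into, is the bookkeeping on which the whole identification rests. The vanishing of the diagonal (regularized) trace of $K$ is what guarantees that the $\det_2$-renormalization contributes no spurious exponential prefactor, so that the unnormalized Wronskian is reproduced exactly.
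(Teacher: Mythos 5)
Your setup (construction of the Jost solutions, constancy of the Wronskian via $\tr U = 0$, and the reduction $W = \det S_{11}$ with $S = (M^+)^{-1}M^-$ and $\det M^\pm \equiv 1$) is correct and consistent with the paper. But the proposal has a genuine gap exactly where you locate ``the heart of the matter'': the order-by-order identification of $-\tfrac1j \tr(K^j)$ with the $j$-linear term of $\log\det S_{11}$ is never carried out, only announced. This identification is not routine bookkeeping. The kernels of $(\partial - k\omega_p)^{-1}$ integrate from $+\infty$ or from $-\infty$ according to the sign of $\real\,\omega_p$, so $\tr(K^j)$ is a sum over cyclic index loops $p_1\to\cdots\to p_j\to p_1$ of integrals over mixed, non-simplex regions, while iterating the Volterra equations for $M^-$ produces simplex-ordered integrals with boundary data split between the two infinities; moreover, for $l>1$ the object to be expanded is the logarithm of an $l\times l$ block determinant, not of a single matrix entry, so the determinant combinatorics of $S_{11}$ must be controlled before any comparison with cyclic traces can even begin. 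For the $2\times 2$ AKNS system this matching is precisely the shuffle-algebra argument of Koch--Tataru, a substantial piece of work in its own right; in the general $N\times N$ setting with several left columns no such computation is supplied, and the paper states explicitly that it chose a different route to avoid it. As written, your proposal reduces the theorem to an unproved multilinear identity that carries all of the difficulty.

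For comparison, the paper never expands either side beyond its first variation: it shows that both $\log T^{-1}$ and $\log{\det}_2\big(1-R_0U_0(u)\big)$ have the same functional derivative, namely $\tr\big(\nabla_i U_0(u)\,\tilde g\big)$, where $\tilde g$ is the diagonal of the integral kernel of $R-R_0$. On the Wronskian side this follows from variation of constants together with Jacobi's formula for the derivative of a determinant (Lemma \ref{lem:functionalderivativegeneral}); on the determinant side, differentiating the series $\sum_{l\ge 2}\tfrac1l \tr\big((R_0U_0)^l\big)$ term by term makes it resum into the geometric series for $R-R_0$ (Lemma \ref{lem:logdetderivativegeneral}), so no cross-identification of individual orders is ever needed. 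Since both sides vanish at $u=0$, equality follows by integrating the variation from $0$ to $u$. To complete your approach you would either have to prove the multilinear identity directly --- essentially redoing the shuffle-algebra computation in the $N\times N$, block-determinant setting --- or switch to this variational argument.
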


Having established Theorem \ref{thm:main2small} we know that the renormalized Fredholm determinant in \eqref{eq:logdettrace} is conserved, because the transmission coefficient is (which follows from the fact that the matrices in the compatibility condition for \eqref{eq:qdNLS} are trace-free). This conservation is needed to employ the formalism of \cite{kvz}.

The proof of Theorem \ref{thm:main2small} is inspired by work in progress \cite{kkl}. Instead of showing holomorphy of both sides and comparing their zeros as in \cite{traceideals} which in the general situation \eqref{eq:firstordersystem1} has the issue that the zeros are more complicated and not simple anymore (see \cite{bealscoifman}), and instead of writing the equations for Jost solutions with the help of the Fredholm determinant as in \cite{jostpais,largedataec}, we prove the equality of the inverse transmission coefficient and the renormalized Fredholm determinant by showing that their functional derivatives coincide. This may also help in identifying the density function for $\log T$ (compare to the proof of \cite[Proposition 2.4]{annals}) in future work. More precisely we show that with the notation as in Theorem \ref{thm:main2small}, the functional derivative is
\[
    \frac{\delta}{\delta u_i} \log {\det}_2\big(1-(\partial-kJ)^{-1}U_0(u)\big) = \tr(\nabla_i U_0(u)\tilde g),
\]
where $\tilde g$ is the diagonal of the integral kernel of $(\partial_x - U)^{-1} - (\partial_x - kJ)^{-1}$, and $\nabla_i U_0(u)$ is defined by $\frac{d}{ds}|_{s=0}U_0(u+sve_i) = v\nabla_i U_0(u)$ for all $v \in C_c^\infty(\R)$. Compared to \cite{annals}, and as in \cite{hgknv}, the renormalization of $(\partial_x - U)^{-1}$ is necessary before calculating the diagonal Green's function because its first order term $(\partial_x - kJ)^{-1}$ does not have a meaningful restriction to $x = y$. 

This work is structured as follows. In Section \ref{sec:preliminaries} we list preliminaries to understand later calculations. In Section \ref{sec:transmission} we construct the Jost solutions and the transmission coefficient for \eqref{eq:qdNLS}. This is also done in the case when the function has less than $L^2$ regularity and lies in a modulation space (see Theorem \ref{thm:modulation}). In Section \ref{sec:equality1} we prove equality of the inverse transmission coefficient and the renormalized Fredholm determinant for \eqref{eq:qdNLS}. These proofs are less involved than in the general case and help in its understanding.  In Section \ref{sec:apriori} we prove Theorem \ref{thm:main1} and in Section \ref{sec:equality2} we prove Theorem \ref{thm:main2}. Section \ref{sec:questions} lists some open and interesting questions which the author will probably not address soon.

\subsubsection*{Acknowledgements} The author is grateful to Herbert Koch for useful discussions relating to Lemma \ref{lem:formofgreensfunctiongeneral}. He also in general profited from many ideas from \cite{kkl}. Funded by the Deutsche Forschungsgemeinschaft (DFG, German Research Foundation) – Project-
ID 258734477 – SFB 1173.

\section{Preliminaries}\label{sec:preliminaries}

In what follows we will often make use of the operators $(\partial - z)^{-1}$, which we define for $f \in \Sc'(\R)$ as
\begin{equation}\label{eq:integralkernel}
    ((\partial - z)^{-1}f)(x) = \begin{cases}-\int_x^\infty e^{z(x-y)}f(y)\, dy, \qquad \text{if } \real z > 0,\\
        \int_{-\infty}^x e^{z(x-y)}f(y)\, dy, \qquad \text{ if } \real z <  0.\end{cases}
\end{equation}
In particular when assuming $k > 0$ the operators $R_1 = (\partial- k)^{-1}, R_2 = (\partial- \omega^2k)^{-1}, R_3 = (\partial- \omega k)^{-1}$, $\omega = \exp(2\pi i/3)$, have integral kernels 
\begin{align*}
    K_1(x,y) &= - \chi_{\{x < y\}} e^{k(x-y)},\\
    K_2(x,y) &= \chi_{\{x > y\}} e^{\omega^2 k(x-y)},\\
    K_3(x,y) &= \chi_{\{x > y\}} e^{\omega k(x-y)}.
\end{align*}
Given a functional $F: L^2 \to \C$ we define its functional derivative $\frac{\delta F}{\delta u}$ via duality as the unique function resp. distribution satisfying
\[
    \frac{d}{ds}\Big|_{s = 0} F(u + sv) = \int_\R \frac{\delta F}{\delta u}(x)v(x)\, dx, \qquad \text{for all } v \in \Sc(\R).
\]
Partial functional derivatives are defined similarly. We denote the Schatten class of compact operators with $p$ summable singular values as $\I_p$. $\I_1$ is called trace class, $\I_2$ are Hilbert-Schmidt operators and $\I_\infty$ are the compact operators. If $A \in \I_1$ is of trace class on the separable Hilbert space $H$, then its trace is
\[
    \tr_H(A) = \sum_{n=1}^\infty \langle \phi_n, A\phi_n\rangle, \qquad (\phi_n)_n \text{ an orthonormal basis}.
\]
In particular if $B$ which is written in matrix form is trace class on $H^n$,
\[
    B = \left(\begin{matrix}A_{11} & \dots & A_{1n}\\
    \dots & \dots & \dots \\
    A_{n1} & \dots & A_{nn}\end{matrix}\right),
\]
its trace is
\begin{equation}\label{eq:higherdimensionaltrace}
    \tr_{H^n}(B) = \sum_{i=1}^n \tr_H(A_{ii}).
\end{equation}
Hilbert-Schmidt operators on $L^2(\R)$ can be identified with their integral kernels $K(x,y) \in L^2(\R^2)$. Schatten class operators satisfy the Hölder-type estimate
\[
    |\tr(AB)| \leq \|AB\|_{\I_1} \leq \|A\|_{\I_p}\|B\|_{\I_q}, \qquad \frac1p + \frac1q = 1,
\]
and the continuous embeddings
\[
    \I_p \subset \I_q, \qquad p \leq q,
\]
hold. Given $A \in \I_1$ one can define the operator determinant of $1 + A$ by the formula
\[
    \log \det(1+A) = \sum_{n=1}^\infty \frac{(-1)^{n+1}}{n}\tr(A^n),
\]
assuming that the latter sum is absolutely convergent. This formula is called Jacobi's formula. If $A$ is a matrix of operators we understand the traces as in \eqref{eq:higherdimensionaltrace}. Given $A \in \I_k$, define the $k$-regularized determinant of $1+A$ by
\[
    \log {\det}_n(1+A) = \sum_{n=k}^\infty \frac{(-1)^{n+1}}{n}\tr(A^n).
\]
We refer to \cite{simon} for a complete introduction to these topics.

\section{The transmission coefficient}\label{sec:transmission}

Equation \eqref{eq:qdNLS} can be seen as a system of equations
\begin{equation}\label{eq:system}
    \begin{cases}
        iq_t + \frac1{\sqrt{3}}q_{xx} + 2irr_x = 0,\\
        ir_t - \frac1{\sqrt{3}}r_{xx} + 2iqq_x = 0,
    \end{cases}
\end{equation}
with the restriction $q = \bar r$. Since the arguments are the same and the calculations are a bit more lucid, we will treat the system \eqref{eq:system} in place of \eqref{eq:qdNLS} until the construction of almost conserved quantities in Section \ref{sec:apriori}. We can write the system \eqref{eq:system} as a Hamiltonian system. To this end we define
\[
    \{F,G\} = \int \frac{\delta F}{\delta q}\Big(\frac{\delta G}{\delta r}\Big)' + \frac{\delta F}{\delta r}\Big(\frac{\delta G}{\delta q}\Big)'.
\]
We can then write Hamilton's equations
\[
    q_t = \{q,H\}, \qquad r_t = \{r,H\}
\]
as
\begin{equation}\label{eq:Hamiltoniansystem}
    q_t = \Big(\frac{\delta H}{\delta r}\Big)', \qquad r_t = \Big(\frac{\delta H}{\delta q}\Big)'.
\end{equation}
Now if
\[
    H = \int \frac{i}{2\sqrt3}\big(q'r-qr') - \frac13\big(r^3 + q^3\big),
\]
we recover \eqref{eq:system} from the Hamiltonian equations \eqref{eq:Hamiltoniansystem}. The system \eqref{eq:system} admits a Lax pair with a $3\times 3$ Lax operator (see \eqref{eq:laxoperatorfromU})
\begin{equation}\label{eq:L}
    L = \left(\begin{matrix}\partial - k & -q & -r\\-r & \partial - \omega^2 k & -q\\ -q & -r & \partial - \omega k  \end{matrix}\right).
\end{equation}
Here, $\omega = \operatorname{Exp}(2\pi i/3)$ is a third root of unity.

The qdNLS equations \eqref{eq:qdNLS} can be written as the compatibility condition of the system of equations
\begin{equation}\label{eq:compatibility}
    \begin{split}
    \psi_x &= U \psi,\\
    \psi_t &= V \psi
    \end{split}
\end{equation}
where the matrices $U$ and $V$ are (see \cite{lenells})
\begin{align*}
    U = k\left(\begin{matrix} 1&0&0\\0&\omega^2&0\\0&0&\omega\end{matrix}\right) + \left(\begin{matrix} 0&q&r\\r&0&q\\q&r&0\end{matrix}\right) = kJ + U_0, \qquad \omega = e^{\frac{2\pi i }{3}},
\end{align*}
and
\begin{align*}
    V = -k^2\left(\begin{matrix} 1&0&0\\0&\omega&0\\0&0&\omega^2\end{matrix}\right) + k\left(\begin{matrix} 0&\omega q&\omega^2 r\\\omega r&0&q\\\omega^2 q&r&0\end{matrix}\right) + \left(\begin{matrix} 0&\frac{i}{\sqrt{3}}q_x - r^2&-\frac{i}{\sqrt{3}}r_x-q^2\\-\frac{i}{\sqrt{3}}r_x-q^2 &0&\frac{i}{\sqrt{3}}q_x-r^2\\ \frac{i}{\sqrt{3}}q_x-r^2&-\frac{i}{\sqrt{3}}r_x-q^2&0\end{matrix}\right).
\end{align*}
For the overdetermined system \eqref{eq:compatibility} to be solvable the compatibility condition
\[
    U_t - V_x + [U,V] = 0
\]
has to be satisfied. The Lax operator $L$ can be recovered from this as
\begin{equation}\label{eq:laxoperatorfromU}
    L = \partial_x - U,
\end{equation}
since in this case
\[
    L_t = -U_t = [\partial_x - L,V] - V_x = [V,L].
\]

We are interested in the inverse transmission coefficient and thus the Jost solutions of $L$. For simplicity we suppress the time dependence and assume $q,r \in C_c^\infty(\R)$ for the moment. Let $K \subset \R$ be a compact interval with $\supp q \cup \supp r \subset K$. Then if $x \in K^c$ is outside of the support of $q$ and $r$, a solution of
\begin{equation}\label{eq:ODE}
    L\phi = 0
\end{equation}
is a solution of
\begin{equation}\label{eq:asymptoticequality}
    \phi_x = k\left(\begin{matrix} 1&0&0\\0&\omega^2&0\\0&0&\omega\end{matrix}\right)\phi.
\end{equation}
We can thus define two systems of solutions of $L\psi = 0$, defined by asymptotics at $x \to \pm \infty$ (which is the same as $x < \inf K$ and $x > \sup K$, respectively, in this case),
\begin{equation}\label{eq:Jostasymptotics}
    \Phi_1^{\pm} = \left(\begin{matrix} e^{kx}\\0\\0\end{matrix}\right), \quad \Phi_2^{\pm} = \left(\begin{matrix} 0\\e^{\omega^2 k x}\\0\end{matrix}\right), \quad \Phi_3^{\pm} = \left(\begin{matrix} 0\\0\\e^{\omega k x}\end{matrix}\right).
\end{equation}
Note that the existence of these functions is ensured by the Cauchy-Lipschitz resp. Picard-Lindelöf theorem, by solving the ODE \eqref{eq:ODE} starting from $x_0 = \inf K$ for $\Phi_i^-$, resp. starting from $x_0 = \sup K$ and solving backwards in $x$ for $\Phi_i^+$.

Recall that $\omega = 1/2(-1 + \sqrt{3}i)$. We will only consider real-valued and positive $k > 0$ when dealing with \eqref{eq:L}, thus the three functions $\Phi_1^-, \Phi_2^+, \Phi_3^+$ are decaying at their corresponding defining asymptotics. We call $\Phi_1^-$ left Jost solution and both $\Phi_2^+, \Phi_3^+$ right Jost solutions. 

Usually one has to be a bit careful with the definition of Jost solutions in higher-order systems. In the case of compactly supported potentials, the definition \eqref{eq:Jostasymptotics} is unambiguous because the asymptotics \eqref{eq:Jostasymptotics} are an actual equality if $x < \inf K$ respectively $x > \sup K$. If we want to extend the definition to Schwartz potentials though, we are lead to define Jost solutions by the condition
\begin{equation}\label{eq:Jostasymptotics2}
    \lim_{x \to \pm \infty} e^{-kx}\Phi_1^{\pm}(x) = \left(\begin{matrix} 1\\0\\0\end{matrix}\right), \quad \lim_{x \to \pm\infty}e^{-\omega^2 kx}\Phi_2^{\pm}(x) = \left(\begin{matrix} 0\\1\\0\end{matrix}\right), \quad \lim_{x\to \pm\infty}e^{-\omega k x}\Phi_3^{\pm}(x) = \left(\begin{matrix} 0\\0\\1\end{matrix}\right).
\end{equation}
Now suppose that $k \in \C \setminus \R$ and that $\real(k\omega) < \real(k\omega^2)$, and that $\Phi_3^+$ is (uniquely) given. Then if $\Phi_2^+$ satisfies the asymptotics in \eqref{eq:Jostasymptotics2} we can add any multiple of $\Phi_3^+$ to $\Phi_2^+$ and the result still satisfies the same asymptotics as $\Phi_2^+$. Thus we face a problem in the uniqueness of the definition of Jost solutions.

In this specific example the uniqueness issue can be overcome by also requiring that $e^{-\omega^2 kx}\Phi_2^{\pm}(x) \in L^\infty(\R)$, as was outlined in the work of Beals-Coifman \cite{bealscoifman}. In here we instead simply restrict to $k > 0$. This results in $\real(k\omega) = \real(k\omega^2)$, making the asymptotics \eqref{eq:Jostasymptotics2} unique (and also bringing us into the excluded set $\Sigma$ in the language of \cite{bealscoifman}).

We define the components $\Phi_{ij}^\pm$ as the entries of the matrix
\[
    \Phi^{\pm} = \big(\Phi_1^\pm|\Phi_2^\pm|\Phi_3^\pm\big),
\]
so that e.g. $\Phi_1^- = (\Phi_{11}^-, \Phi_{21}^-, \Phi_{31}^-)^t$. Note that due to the linear independence of their columns, both matrices $\Phi^-$ and $\Phi^+$ are fundamental matrices for the problem \eqref{eq:ODE}.

Using the Jost solutions we define the inverse of the transmission coefficient as the Wronskian
\begin{equation}\label{def:T}
    T^{-1}(k,q,r) := W(\Phi_1^-,\Phi_2^+,\Phi_3^+),
\end{equation}
where the Wronskian is
\[
    W(\psi_1,\psi_2,\psi_3) = \det(\psi_1|\psi_2|\psi_3).
\]
This definition is independent of $x$: a small calculation reveals that for all $v_1,v_2,v_3 \in \C^3, A \in \C^{3\times 3}$,
\begin{equation}\label{eq:dettotrace}
        \det(Av_1|v_2|v_3) + \det(v_1|Av_2|v_3)+\det(v_1|v_2|Av_3) = \tr(A)\det(v_1|v_2|v_3),
\end{equation}
and hence, 
\[
    \partial_x W(\Phi_1^-,\Phi_2^+,\Phi_3^+) = \tr(U) W(\Phi_1^-,\Phi_2^+,\Phi_3^+) = 0.
\]
The inverse transmission coefficient can also be written as a limit of the left Jost solution at $+\infty$,
\begin{equation}\label{def:Tlimit}
    T^{-1}(k,q,r) = \lim_{x \to \infty} e^{-kx}\Phi_{11}^-(k,q,r).
\end{equation}
Indeed, since the Wronskian is independent of $x$ we see that for all $x > \sup K$
\[
    T^{-1} = W\left(\begin{matrix} \Phi_{11}^-(x) & 0 & 0\\ \Phi_{21}^-(x) & e^{\omega^2 kx} & 0\\ \Phi_{31}^-(x) & 0 & e^{\omega k x}\end{matrix}\right) = e^{-kx}\Phi_{11}^{-}(x),
\]
and thus the limit on the right-hand exists for $x \to \infty$ and also equals $T^{-1}$.

For the simplicity of calculations we often want to use equations like \eqref{eq:qdNLS} in a point-wise fashion. Unfortunately, solutions of dispersive equations with $C^\infty_c$ initial data tend to lose the compact support immediately as time progresses due to the infinite speed of propagation. The better alternative is then to speak of Schwartz solutions of \eqref{eq:qdNLS}, and we mean by that solutions $q \in C^\infty(\R,\Sc(\R))$ of \eqref{eq:qdNLS}. These solutions exist, as can be seen from the work of Beals--Coifman \cite{bealscoifman} respectively by using a persistence of regularity argument on the global solutions of Grünrock \cite{gruenrock}.

Thus we have to extend the definition of Jost solutions and the inverse transmission coefficient from $C^\infty_c$ functions to Schwartz functions. This is done in the next Theorem \ref{thm:jostsolutionslowreg} where it is actually shown that we need much less than Schwartz regularity and decay. 

Note though that this theorem restricts to the left Jost solution, which by \eqref{def:Tlimit} is fortunately enough to define the inverse transmission coefficient. For constructing both right Jost solutions, $L^2$ decay will in general be not enough due to $\real(\omega k) = \real(\omega^2 k)$, instead one would have to impose an $L^1$ condition. 

\begin{theorem}\label{thm:jostsolutionslowreg}
    There exists $\delta > 0$ such that for all $q,r \in L^2$ and $k > 0$ with $k^{-1/2}(\|q\|_{L^2} + \|r\|_{L^2}) \leq \delta$ there exists a unique left Jost solution $\Phi_1^-$ of the equation
    \[
        \psi_x = U\psi.
    \]
    It satisfies $e^{-kx}\Phi_1^-(x) \in C^0(\R) \times (C^0 \cap L^2(\R)) \times (C^0 \cap L^2(\R))$. Moreover, for $k > 0$ the map
    \[
        \{\|q\|_{L^2} + \|r\|_{L^2} \leq k^{1/2}\delta \} \subset L^2(\R)^2 \to \C, \qquad (q,r) \mapsto T^{-1}(k,q,r)
    \]
    is analytic.
\end{theorem}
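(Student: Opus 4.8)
The plan is to pass to the normalized unknown $m = e^{-kx}\Phi_1^-$, rewrite the Jost equation $\psi_x = U\psi$ as a Volterra integral system, and solve it by a contraction; analyticity of $T^{-1}$ will then drop out of the resulting Neumann series. First I would use the scaling symmetry to reduce to $k = 1$: the map $(q,r,k)\mapsto(\tilde q,\tilde r,1)$ with $\tilde q(s)=k^{-1}q(s/k)$, $\tilde r(s)=k^{-1}r(s/k)$ conjugates the equation to one with spectral parameter $1$ and satisfies $\|\tilde q\|_{L^2}+\|\tilde r\|_{L^2}=k^{-1/2}(\|q\|_{L^2}+\|r\|_{L^2})$, so the scale-invariant smallness hypothesis becomes simply $\|\tilde q\|_{L^2}+\|\tilde r\|_{L^2}\le\delta$. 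Writing $\alpha=\omega^2-1$ and $\beta=\omega-1$ (note $\real\alpha=\real\beta=-\tfrac32<0$ since $\real\omega=\real\omega^2=-\tfrac12$), the condition $m\to(1,0,0)^t$ as $x\to-\infty$, i.e. the left asymptotics \eqref{eq:Jostasymptotics2}, is equivalent to the system
\begin{align*}
 m_1(x)&=1+\int_{-\infty}^x\big(qm_2+rm_3\big)\,dy,\\
 m_2(x)&=\int_{-\infty}^x e^{\alpha(x-y)}\big(rm_1+qm_3\big)\,dy,\\
 m_3(x)&=\int_{-\infty}^x e^{\beta(x-y)}\big(qm_1+rm_2\big)\,dy.
\end{align*}
The directions of integration are forced by the asymptotics: the first component carries the only bounded homogeneous solution (the constant $1$), while the off-diagonal components have no homogeneous solution decaying at $-\infty$ and so must be integrated from $-\infty$. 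Note the first line uses the opposite convention to $R_1$ in \eqref{eq:integralkernel}, reflecting $\real k>0$.

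I would solve this in $Y=L^\infty(\R)\times L^2(\R)\times L^2(\R)$ with $\|m\|_Y=\|m_1\|_{L^\infty}+\|m_2\|_{L^2}+\|m_3\|_{L^2}$. Writing the right-hand side as $\Psi(m)=(1,0,0)^t+\Psi_0 m$, the operator $\Psi_0$ is linear in $m$ and in $(q,r)$. The crucial point, and what I expect to be the main obstacle, is that at $L^2$ regularity a product such as $qm_3$ lies only in $L^1$; this is handled by the smoothing of the convolution operators $G_\alpha h=g_\alpha*h$, $g_\alpha(x)=e^{\alpha x}\chi_{\{x>0\}}$, which satisfy $\|G_\alpha h\|_{L^2}\le\|g_\alpha\|_{L^1}\|h\|_{L^2}$ and $\|G_\alpha h\|_{L^2}\le\|g_\alpha\|_{L^2}\|h\|_{L^1}$ by Young's inequality, the finiteness of $\|g_\alpha\|_{L^1}$ and $\|g_\alpha\|_{L^2}$ resting precisely on $\real\alpha<0$. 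Combining these with $\|fh\|_{L^1}\le\|f\|_{L^2}\|h\|_{L^2}$ and $\|fh\|_{L^2}\le\|f\|_{L^2}\|h\|_{L^\infty}$, and bounding the first component in $L^\infty$ by Cauchy--Schwarz, gives $\|\Psi_0 m\|_Y\le C(\|q\|_{L^2}+\|r\|_{L^2})\|m\|_Y$ with an absolute constant $C$. Hence for $\|q\|_{L^2}+\|r\|_{L^2}\le\delta$ with $C\delta<1$ the map $\Psi$ is a contraction, producing a unique fixed point $m=(I-\Psi_0)^{-1}(1,0,0)^t\in Y$; undoing the scaling returns the hypothesis $k^{-1/2}(\|q\|_{L^2}+\|r\|_{L^2})\le\delta$. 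Uniqueness of the Jost solution is then uniqueness of the fixed point in $Y$: any solution of $\psi_x=U\psi$ obeying \eqref{eq:Jostasymptotics2} with $e^{-kx}\Phi_{21}^-,e^{-kx}\Phi_{31}^-\in L^2$ yields such a fixed point, a characterization that is legitimate here only because $\real(\omega k)=\real(\omega^2 k)$ as discussed before the statement.

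It remains to read off the regularity and the analyticity. Differentiating the integral system shows the fixed point solves $\psi_x=U\psi$; here $m_1\in C^0$ and bounded, because it is $1$ plus the integral of the $L^1$ function $qm_2+rm_3$. For $m_2$ (and likewise $m_3$) the representation $G_\alpha h(x)=e^{\alpha x}\int_{-\infty}^x e^{-\alpha y}h(y)\,dy$, in which the weight $e^{-\alpha y}$ decays as $y\to-\infty$, shows the inner integral is finite and continuous in $x$ for $h=rm_1+qm_3$, so $m_2\in C^0$; together with the $L^2$ bound this gives $m_2\in C^0\cap L^2$, and these properties are preserved under unscaling. Finally, fix $k>0$ and regard $\Psi_0=\Psi_0(q,r)$ as a bounded-operator-valued map on $Y$ depending linearly, hence analytically, on $(q,r)\in L^2(\R)^2$, with $\|\Psi_0\|\le C(\|q\|_{L^2}+\|r\|_{L^2})<1$ on the admissible ball (shrink $\delta$ so the bound is strict). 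Then $m=\sum_{n\ge0}\Psi_0^n(1,0,0)^t$ is a norm-convergent power series, so $(q,r)\mapsto m\in Y$ is analytic; by \eqref{def:Tlimit},
\[
 T^{-1}(k,q,r)=\lim_{x\to\infty}m_1(x)=1+\int_\R\big(qm_2+rm_3\big)\,dx,
\]
which is the composition of the analytic map $(q,r)\mapsto m$ with the bounded bilinear functional $((q,r),m)\mapsto\int_\R(qm_2+rm_3)$, and is therefore analytic. Everything beyond the $L^1\!\to\!L^2$ smoothing under the critical $L^2$ scaling is a standard contraction-plus-Neumann-series argument.
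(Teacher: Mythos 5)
Your proof is correct and follows essentially the same route as the paper: the same Volterra integral system for $m=e^{-kx}\Phi_1^-$ with kernels $e^{\alpha(x-y)},e^{\beta(x-y)}$ integrated from $-\infty$, solved by contraction/Neumann series in a space placing the first component in $L^\infty$ and the other two in $L^2$ via Young's inequality, with analyticity of $T^{-1}$ read off from the Neumann series and the limit formula \eqref{def:Tlimit}. The only cosmetic differences are that the paper keeps $k$ explicit through the weighted norm $\|\phi_1\|_{L^\infty}+\|\phi_2\|_{L^\infty}+\|\phi_3\|_{L^\infty}+k^{1/2}(\|\phi_2\|_{L^2}+\|\phi_3\|_{L^2})$ on $C^0_0\times(C^0_0\cap L^2)\times(C^0_0\cap L^2)$ rather than rescaling to $k=1$, and it builds continuity (and boundedness of all components) into the iteration space, whereas you recover continuity a posteriori from the integral representation.
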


\begin{proof}
    We construct $\Phi_1^-$. We are looking for a solution of
    \[
        \psi_x = \left(\begin{matrix}k&q&r\\r&k\omega^2&q\\q&r&k\omega\end{matrix}\right) \psi
    \]
    with asymptotics
    \[
        \psi(x) = \left(\begin{matrix}e^{kx}\\0\\0\end{matrix}\right) + o(1),
    \]
    as $x \to - \infty$. Let $\phi = e^{-kx}\psi(x)$. Then $\phi$ satisfies,
    \begin{align*}
        \phi_{1,x} &= q\phi_2 + r\phi_3,\\
        \phi_{2,x} &= r\phi_1 + k(\omega^2 -1) + q\phi_3,\\
        \phi_{3,x} &= q\phi_1 + r\phi_2 + k(\omega - 1)\phi_3,
    \end{align*}
    and hence
    \begin{equation}\label{eq:systemphi}
    \begin{split}
        \phi_1 &= \partial_x^{-1}(q\phi_2 + r\phi_3) + 1,\\
        \phi_2 &= (\partial_x -k(\omega^2-1))^{-1}(r\phi_1 + q\phi_3),\\
        \phi_3 &= (\partial_x -k(\omega-1))^{-1}(q\phi_1 + r\phi_2),
    \end{split}
    \end{equation}
    where we define the operator $\partial_x^{-1}$ acting on $L^1$ functions by
    \[
        (\partial_x^{-1}f)(x) = \int_{-\infty}^x f(y)\, dy,
    \]
    and the operators $(\partial_x - \eta)^{-1}$ as in \eqref{eq:integralkernel}. Note that the numbers $k(\omega^2-1)$ and $k(\omega-1)$ have negative real part. In particular by Young's convolution inequality,
    \begin{align}
        \label{eq:est1}\|\partial^{-1}f\|_{L^\infty} &\leq \|f\|_{L^1}\\
        \label{eq:est2}\|(\partial-k(\omega^2-1))^{-1}f\|_{L^p}+\|(\partial-k(\omega-1))^{-1}f\|_{L^p} &\lesssim k^{-1 - \frac1p+\frac1q}\|f\|_{L^q}, \qquad q \leq p
    \end{align}
    We can solve the system \eqref{eq:systemphi} via Picard iteration in the Banach space $X_k = C^0_0 \times (C^0_0 \cap L^2) \times (C^0_0 \cap L^2)$, where $C^0_0$ denotes continuous functions with limit at $\pm \infty$. Note that $C^0_0$ is a closed subspace of $L^\infty(\R)$. We endow $X_k$ with the norm
    \[
        \|\phi\|_{X_k} = \|\phi_1\|_{L^\infty} +\|\phi_2\|_{L^\infty} +\|\phi_3\|_{L^\infty} + k^{\frac12}(\|\phi_2\|_{L^2} + \|\phi_3\|_{L^2}).
    \]
    We write \eqref{eq:systemphi} as
    \begin{align*}
        \phi = S\phi + \left(\begin{matrix}1\\0\\0\end{matrix}\right)
    \end{align*}
    where
    \begin{align*}
        S = \left(\begin{matrix}0&\partial^{-1}q&\partial^{-1}r\\(\partial-k(\omega^2-1))^{-1}r&0&(\partial-k(\omega^2-1))^{-1}q\\(\partial-k(\omega-1))^{-1}q&(\partial-k(\omega-1))^{-1}r&0\end{matrix}\right)
    \end{align*}
    Then \eqref{eq:est1} and \eqref{eq:est2} show that
    \[
        \|S\phi\|_{X_k} \lesssim k^{-1/2}(\|q\|_{L^2}+\|r\|_{L^2})\|\phi\|_{X_k},
    \]
    hence by writing the Neumann series for $(1-S)^{-1}$ there exists a unique solution $\phi \in X_k$ solving \eqref{eq:systemphi} provided
    \[
        k^{-1/2}(\|q\|_{L^2}+\|r\|_{L^2}) \ll 1.
    \]
    The analyticity of the map $(q,r) \mapsto T^{-1}$ now follows from \eqref{def:Tlimit}.
\end{proof}

The importance of the transmission coefficient is that it is a conserved quantity for solutions of \eqref{eq:qdNLS}:

\begin{theorem}\label{thm:conservation}
    Given $k > 0$ and $q,r$ Schwartz solutions of \eqref{eq:qdNLS}, the transmission coefficient $T^{-1}(k,q,r)$ is conserved.
\end{theorem}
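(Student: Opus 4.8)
The plan is to use the Lax-pair structure directly: promote the Jost solutions to time-dependent objects, differentiate the Wronskian $T^{-1}=\det(\Phi_1^-|\Phi_2^+|\Phi_3^+)$ in $t$, and exploit the trace-freeness built into $1+\omega+\omega^2=0$. First I would regard $\Phi_1^-(t,\cdot),\Phi_2^+(t,\cdot),\Phi_3^+(t,\cdot)$ as solving $\partial_x\Phi=U\Phi$ at each fixed $t$ with the normalized asymptotics \eqref{eq:Jostasymptotics2}, their smooth dependence on $t$ following from persistence of Schwartz regularity and the analytic dependence established in Theorem \ref{thm:jostsolutionslowreg}. The key algebraic observation is that for any such Jost solution $\Phi$, the quantity $W:=\Phi_t-V\Phi$ again solves the spatial ODE: differentiating $\partial_x\Phi=U\Phi$ in $t$, interchanging derivatives, and substituting the compatibility condition $U_t-V_x+[U,V]=0$ yields, after cancellation, $\partial_x W=UW$.

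Next I would pin down $W$ for each Jost solution by matching asymptotics. Since $q,r$ are Schwartz, $V\to V_\infty:=-k^2\operatorname{diag}(1,\omega,\omega^2)$ as $x\to\pm\infty$, while the normalized leading asymptotics of the Jost solutions are $t$-independent, so $\Phi_t$ is of strictly lower order than $V\Phi$ at the relevant end. For $\Phi_1^-$ at $-\infty$ only $\Phi_1^-$ decays (as $k>0$), which forces $W=k^2\Phi_1^-$ with the scalar read off from $-V_\infty e_1=k^2e_1$. For $\Phi_2^+,\Phi_3^+$ at $+\infty$ both decay at the common rate $e^{-kx/2}$, but the distinct directions $e_2,e_3$ together with the distinct oscillations $\omega^2k,\omega k$ separate them, giving $W=k^2\omega\,\Phi_2^+$ and $W=k^2\omega^2\,\Phi_3^+$. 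In summary, the normalized Jost solutions satisfy
\[
    \Phi_{1,t}^-=(V+k^2)\Phi_1^-,\qquad \Phi_{2,t}^+=(V+k^2\omega)\Phi_2^+,\qquad \Phi_{3,t}^+=(V+k^2\omega^2)\Phi_3^+.
\]

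Finally I would differentiate $T^{-1}$ column by column, substitute these relations, and split off the scalar parts. The scalar contributions add up to $k^2(1+\omega+\omega^2)\,T^{-1}=0$, while the three determinants carrying a factor $V$ in one column combine via \eqref{eq:dettotrace} (with $A=V$) to $\tr(V)\,T^{-1}$, and $\tr(V)=-k^2(1+\omega+\omega^2)=0$. Hence $\partial_t T^{-1}=0$. The main obstacle I anticipate is the asymptotic matching for the two right Jost solutions: because $\real(\omega k)=\real(\omega^2 k)$ they decay at the same rate, so the scalar cannot be isolated by decay alone and one must argue through the separated component directions and oscillations; alongside this, one must justify that the $t$-derivative commutes with the asymptotic limits defining the normalization, which again rests on the Schwartz control from Theorem \ref{thm:jostsolutionslowreg}.
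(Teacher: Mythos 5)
Your proposal is correct, and its core mechanism is the same as the paper's: the compatibility condition \eqref{eq:compatibility} implies that $W=\Phi_t-V\Phi$ again solves $\partial_x W=UW$, and asymptotic matching then yields $\Phi_{1,t}^-=(V+k^2)\Phi_1^-$, exactly as in the paper. Where you diverge is in the final step. The paper does \emph{not} differentiate the Wronskian column by column; instead it invokes the limit representation \eqref{def:Tlimit}, $T^{-1}=\lim_{x\to\infty}e^{-kx}\Phi_{11}^-$, so that only the time evolution of the \emph{left} Jost solution is ever needed: one passes $\partial_t$ through the limit (justified by $C^1$ dependence in $C^0_0$), uses $e^{-kx}\Phi_1^-\to(T^{-1},0,0)^t$ and $V\to-k^2J^2$ at $+\infty$, and the two scalars cancel. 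Your route additionally derives $\Phi_{2,t}^+=(V+k^2\omega)\Phi_2^+$ and $\Phi_{3,t}^+=(V+k^2\omega^2)\Phi_3^+$ and then concludes via \eqref{eq:dettotrace} and $\tr V=0$, $1+\omega+\omega^2=0$. This is a perfectly sound and arguably more symmetric argument, but it buys you exactly the complication the paper's choice of \eqref{def:Tlimit} was designed to avoid: you must construct the right Jost solutions and their time regularity for Schwartz (non-compactly supported) data, and you must resolve the degeneracy $\real(\omega k)=\real(\omega^2 k)$ by the direction/oscillation argument you sketch. Note that Theorem \ref{thm:jostsolutionslowreg}, which you cite for this, only constructs $\Phi_1^-$; as the paper remarks, $L^2$ decay does not suffice for the right Jost solutions, so you would need a separate $L^1$-based construction (available for Schwartz data, e.g.\ via Beals--Coifman) before your column-wise differentiation of $\det(\Phi_1^-|\Phi_2^+|\Phi_3^+)$ is legitimate. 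So: same key lemma, different endgame; the paper's endgame is more economical precisely because it needs only one Jost solution, while yours is self-contained at the level of the Wronskian but carries a heavier construction burden that your citation does not cover.
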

\begin{proof}
    If $q,r$ are Schwartz solutions of equations \eqref{eq:system} then the matrices from \eqref{eq:compatibility} satisfy
    \[
        U_t - V_x + [U,V] = 0.
    \]
    Thus given a solution $\psi$ of $\psi_x = U\psi$ we have
    \[
        (\psi_t - V\psi)_x = U(\psi_t - V\psi).
    \]
    This means that $\psi_t - V\psi$ can be written as a linear combination of $(\Phi_i^{\pm})_{i=1,2,3}$. From the time independence of the asymptotics of $\Phi_{1}^-$ at $- \infty$, and the decaying behaviour of $q,r$ in the matrix $V$, we find
    \begin{align*}
        \Phi_{1,t}^- = V\Phi_{1}^- + k^2 \Phi_{1}^-.
    \end{align*}
    Thus by \eqref{eq:dettotrace},
    \begin{align*}
        \partial_t T^{-1} &= \Big[\lim_{x\to \infty} e^{kx}(V\Phi_{1}^- + k^2 \Phi_{1}^-)(x)\Big]_1
        = \left[-k^2 J^2 \left(\begin{matrix}T^{-1}\\0\\0\end{matrix}\right) + k^2\left(\begin{matrix}T^{-1}\\0\\0\end{matrix}\right)\right]_1 = 0.
    \end{align*}
    Here we were able to exchange limit and $\partial_t$ because $\lim_{x\to \infty}$ is a linear operator acting on the space $C^0_0$ and $t \mapsto \Phi_1^-$ is continuously differentiable in this space.
\end{proof}

In order to connect the transmission coefficient to the renormalized Fredholm determinant we want to use their respective functional derivatives. For the inverse transmission coefficient, these are calculated in the next lemma.

\begin{lemma}\label{lem:functionalderivative}
    Let $q,r \in C_c^\infty(\R)$. The functional derivatives of the transmission coefficient are
    \begin{align*}
        \frac{\delta T^{-1}}{\delta q} &= \Phi_{11}^-(\Phi_{12}^+\Phi_{23}^+ - \Phi_{13}^+\Phi_{22}^+) + \Phi_{21}^-(\Phi_{22}^+\Phi_{33}^+ - \Phi_{23}^+\Phi_{32}^+) + \Phi_{31}^-(\Phi_{13}^+\Phi_{32}^+ - \Phi_{12}^+\Phi_{33}^+)\\
        \frac{\delta T^{-1}}{\delta r} &= \Phi_{11}^-(\Phi_{13}^+\Phi_{32}^+ - \Phi_{12}^+\Phi_{33}^+) + \Phi_{21}^-(\Phi_{12}^+\Phi_{23}^+ - \Phi_{13}^+\Phi_{22}^+) + \Phi_{31}^-(\Phi_{22}^+\Phi_{33}^+ - \Phi_{23}^+\Phi_{32}^+)
    \end{align*}
\end{lemma}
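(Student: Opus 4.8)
The plan is to compute the derivative of $T^{-1} = W(\Phi_1^-,\Phi_2^+,\Phi_3^+) = \det(\Phi_1^-|\Phi_2^+|\Phi_3^+)$ under the perturbation $q \mapsto q + sv$ with $v \in C_c^\infty(\R)$, and then read off the density against $v$. Since $q,r \in C_c^\infty$, the perturbed potential is still compactly supported and all three Jost solutions depend smoothly on $s$ (by the Picard--Lindelöf construction of \eqref{eq:Jostasymptotics}). Writing $\dot\Phi := \frac{d}{ds}\big|_{s=0}\Phi$ and differentiating the determinant columnwise yields
\[
    \frac{d}{ds}\Big|_{s=0} T^{-1} = \det(\dot\Phi_1^-|\Phi_2^+|\Phi_3^+) + \det(\Phi_1^-|\dot\Phi_2^+|\Phi_3^+) + \det(\Phi_1^-|\Phi_2^+|\dot\Phi_3^+).
\]
Each column solves $\partial_x\Phi = U\Phi$, and differentiating this identity in $s$ gives the inhomogeneous variation equation $\partial_x\dot\Phi = (\partial_q U_0)\,v\,\Phi + U\dot\Phi$, where $\partial_q U_0$ is the constant off-diagonal (hence trace-free) matrix obtained by differentiating $U_0$ with respect to $q$.

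First I would isolate the single term $A(x) := \det(\dot\Phi_1^-|\Phi_2^+|\Phi_3^+)$ and compute its $x$-derivative. Applying the product rule together with the variation equation for $\dot\Phi_1^-$ and $\partial_x\Phi_j^+ = U\Phi_j^+$, the three contributions in which $U$ acts on one column combine, via the determinant--trace identity \eqref{eq:dettotrace}, into $\tr(U)\,A(x) = 0$, since $\tr(U) = 0$. What survives is exactly the inhomogeneity,
\[
    A'(x) = v(x)\,\det\big((\partial_q U_0)\Phi_1^-\,\big|\,\Phi_2^+\,\big|\,\Phi_3^+\big).
\]

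Next I would pin down the boundary values of $A$. For $x$ to the left of $\supp q \cup \supp r \cup \supp v$ the potential and its variation vanish and $\Phi_1^-(x) = e^{kx}(1,0,0)^t$ is independent of $s$, so $\dot\Phi_1^- \equiv 0$ there and $A(-\infty) = 0$. For $x$ to the right of these supports $\dot\Phi_2^+ \equiv 0 \equiv \dot\Phi_3^+$, so the second and third determinants in the first display vanish and $A(x)$ equals the $x$-independent constant $\frac{d}{ds}\big|_{s=0}T^{-1}$; thus $A(+\infty) = \frac{d}{ds}\big|_{s=0}T^{-1}$. Integrating $A'$ over $\R$ therefore gives
\[
    \frac{d}{ds}\Big|_{s=0}T^{-1} = \int_\R v(x)\,\det\big((\partial_q U_0)\Phi_1^-\,\big|\,\Phi_2^+\,\big|\,\Phi_3^+\big)\,dx,
\]
so that $\frac{\delta T^{-1}}{\delta q} = \det\big((\partial_q U_0)\Phi_1^-|\Phi_2^+|\Phi_3^+\big)$. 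Expanding along the first column and substituting $(\partial_q U_0)\Phi_1^- = (\Phi_{21}^-,\Phi_{31}^-,\Phi_{11}^-)^t$ reproduces the stated formula for $\frac{\delta T^{-1}}{\delta q}$; the computation for $\frac{\delta T^{-1}}{\delta r}$ is identical with $(\partial_r U_0)\Phi_1^- = (\Phi_{31}^-,\Phi_{11}^-,\Phi_{21}^-)^t$.

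The main obstacle is organizational rather than analytic: one must arrange the columnwise expansion so that every term in which $U$ hits a column collapses through the trace identity \eqref{eq:dettotrace}---this is precisely where trace-freeness $\tr(U) = 0$ is essential---and then correctly identify which of the three determinant terms survives at each spatial infinity. The compact support of $q$, $r$ (and of $v$) makes the boundary analysis exact and removes any need to track the individual growth and decay rates of the Jost solutions; only afterwards, when extending the identity to Schwartz data, would one invoke the asymptotic control furnished by Theorem \ref{thm:jostsolutionslowreg}.
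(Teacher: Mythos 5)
Your proof is correct, and it takes a genuinely different route from the paper's. The paper works from the limit representation \eqref{def:Tlimit}, $T^{-1} = \lim_{x\to\infty} e^{-kx}\Phi_{11}^-(x)$: it solves the variational equation $\dot\Phi_x = U\dot\Phi + \dot U\Phi$ explicitly by variation of constants using the fundamental matrix $\Phi^+$, inverts $\Phi^+$ via its cofactor matrix (using $\det\Phi^+ = 1$), and then passes to the limit $x\to\infty$ where $e^{-kx}\Phi^+(x) \to E_{11}$ kills all but the first row. You instead differentiate the Wronskian representation \eqref{def:T} columnwise, derive the scalar ODE $A'(x) = v(x)\det\bigl((\partial_q U_0)\Phi_1^-|\Phi_2^+|\Phi_3^+\bigr)$ for the single surviving term via the trace identity \eqref{eq:dettotrace}, pin the boundary values $A(-\infty)=0$ and $A(+\infty)=\dot T^{-1}$ using compact support, and integrate; I checked that your cofactor expansions of $\det\bigl((\partial_q U_0)\Phi_1^-|\Phi_2^+|\Phi_3^+\bigr)$ and $\det\bigl((\partial_r U_0)\Phi_1^-|\Phi_2^+|\Phi_3^+\bigr)$ reproduce the stated formulas exactly. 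What each buys: your argument is more self-contained (no variation-of-constants formula, no matrix inversion, no asymptotic evaluation of $\Phi^+$), and it mirrors the standard proof that the Wronskian is $x$-independent, so the role of $\tr U = 0$ is transparent; the paper's construction of $\dot\Phi^-$ as an explicit integral is more machinery up front, but that machinery is what recurs in the $N\times N$ generalization (Lemma \ref{lem:functionalderivativegeneral}, where the Green's function replaces $\Phi^+(x)(\Phi^+(y))^{-1}$ and Jacobi's formula for the derivative of a determinant replaces the columnwise expansion), so it integrates more directly with the rest of the paper. Your closing remark is also accurate: only $\tr U = 0$ is needed for the collapse (trace-freeness of $\partial_q U_0$ itself plays no role), and compact support is what makes the boundary analysis exact.
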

\begin{proof}
    We use that
    \[
        T^{-1} = \lim_{x \to \infty} e^{-kx}\Phi^-_{11}(x).
    \]
    Given $q,r$ compactly supported, the fundamental matrix $\Phi^-$ satisfies
    \[
        \Phi_x = U\Phi, \qquad \Phi(x) = \left(\begin{matrix}
            e^{kx}&0&0\\0&e^{\omega^2 kx}&0\\0&0&e^{\omega kx}
        \end{matrix}\right)\quad \text{if}\quad x \ll 0.
    \]
    Thus $\dot\Phi^-(k,q,r) = \frac{d}{ds}|_{s=0} \Phi^-(k,q+s\tilde q,r+s\tilde r)$ solves
    \[
        \dot \Phi_x = U\dot \Phi + \dot U\Phi, \qquad \Phi(x) = 0\quad \text{if}\quad x \ll 0,
    \]
    and where
    \begin{equation*}
    \dot U = \left(\begin{matrix}0 & \tilde q & \tilde r\\ \tilde r & 0 & \tilde q\\ \tilde q & \tilde r & 0  \end{matrix}\right).
    \end{equation*}
    By the variation of constants formula, the unique solution of this boundary value problem can be written as
    \[
        \dot \Phi^-(x) = \int_{-\infty}^x \Phi^+(x)\big(\Phi^+(y)\big)^{-1}\dot U(y)\Phi^-(y)\, dy,
    \]
    where integrability towards $-\infty$ is no issue due to the compact support of $q,r$. Here the condition $\dot \Phi^-(x) = 0$ if $x \ll 0$ is ensured by the integration up to $x$ and the compact support. Now note that $e^{(\omega^2-1)kx}, e^{(\omega^2-1)kx} \to 0$ as $x \to \infty$, hence
    \[
        \lim_{x \to \infty} e^{-kx}\Phi^+(x) = \left(\begin{matrix}
            1&0&0\\0&0&0\\0&0&0
        \end{matrix}\right) =: E_{11}.
    \]
    Thus,
    \[
        \dot T^{-1} = \int_\R \Big[E_{11}\big(\Phi^+(y)\big)^{-1}\dot U(y)\Phi^-(y)\, \Big]_{11} \,dy.
    \]
    Now by Jacobi's formula for the inverse of a matrix, and using that $\det \Phi^+(y) = 1$ for all $y \in \R$,
    \[
        E_{11}\big(\Phi^+\big)^{-1} = \left(\begin{matrix}
            \Phi^+_{22}\Phi^+_{33}-\Phi^+_{23}\Phi^+_{32}&\Phi^+_{13}\Phi^+_{32}-\Phi^+_{12}\Phi^+_{33}&\Phi^+_{12}\Phi^+_{23}-\Phi^+_{13}\Phi^+_{22}\\ 0&0&0\\0&0&0
        \end{matrix}\right),
    \]
    and moreover
    \[
        \dot U \Phi^- = \left(\begin{matrix}
            \tilde q \Phi^-_{21} + \tilde r \Phi^-_{31}&\dots &\dots\\ \tilde q \Phi^-_{31} + \tilde r \Phi^-_{11}&\dots &\dots\\ \tilde q \Phi^-_{11} + \tilde r \Phi^-_{21}&\dots &\dots
        \end{matrix}\right),
    \]
    Hence
    \begin{align*}
        \dot T^{-1} &= \int_\R (\tilde q \Phi^-_{21} + \tilde r \Phi^-_{31})(\Phi^+_{22}\Phi^+_{33}-\Phi^+_{23}\Phi^+_{32}) +(\tilde q \Phi^-_{31} + \tilde r \Phi^-_{11})(\Phi^+_{13}\Phi^+_{32}-\Phi^+_{12}\Phi^+_{33}) \\
        &\qquad + (\tilde q \Phi^-_{11} + \tilde r \Phi^-_{21})(\Phi^+_{12}\Phi^+_{23}-\Phi^+_{13}\Phi^+_{22}) \,dy,
    \end{align*}
    finishing the proof of Lemma \ref{lem:functionalderivative}.
\end{proof}

Before continuing with the renormalized Fredholm determinant we want to comment on the $L^2$ assumption in Theorem \ref{thm:jostsolutionslowreg}. While for our purposes (namely defining $T^{-1}$ for Schwartz solutions) $L^2$ was clearly enough, the question of a threshold regularity to define Jost solutions is still very interesting. Approaches to define the Jost solutions in lower regularity $H^s(\R)$, $-\frac12 < s < 0$ were laid out by Koch--Tataru \cite{kt}. They used a special class of spaces of bounded variation in order to perform a fixed point iteration similar to Theorem \ref{thm:jostsolutionslowreg}. 

There is also a different approach, making use of so called modulation spaces $M_{2,p}(\R)$, which seems to be new. Define the modulation space norm as
\[
    \|q\|_{M_{2,p}} = \Big(\sum_{n \in \Z}\Big(\int_{n}^{n+1} |\hat q(\xi)|^2 \, d\xi\Big)^\frac{p}{2}\Big)^{\frac1p},
\]
and $M_{2,p}(\R)$ as the subspace of tempered distributions with finite modulation space norm.
While compared to the spaces $l^2_\tau DU^2$ used in \cite{kt} modulation spaces do not admit an embedding of the type\[
    H^{-\frac12 +\varepsilon}(\R) \subset X, \qquad \varepsilon > 0,
\]
they still comprise functions of lower regularity than $L^2(\R)$ since
\[
    L^2(\R) \subset M_{2,p} \subset H^{-\frac12}(\R), \qquad \text{ if } p < \infty.
\]
There are also versions of these spaces with $L^r$ type decay, $M_{r,p}(\R)$. Similar to before these can be defined in terms of a norm,
\[
    \|q\|_{M_{r,p}} = \Big(\sum_{n \in \Z}\|\square_n q\|_{L^r}^p\Big)^{\frac1 p},
\]
where $(\square_n)_{n \in \Z}$ are the Fourier multiplier operators corresponding to a smooth, positive, unit sized partition of unity. The spaces $M_{r,1}$ used below can be shown to satisfy the continuous embeddings
\[
    M_{r,1}(\R) \subset C^0_b(\R) \cap L^r(\R),
\]
where $C^0_b(\R)$ denotes the space of continuous and bounded functions. In particular $M_{r,1} \cap C^0_0$ is a closed subspace of $M_{r,1}$.

We do not go into details here and refer to \cite[Section 2]{klaus2} for a concise overview, respectively the book \cite{wang} for a more thorough introduction to these spaces.

\begin{theorem}\label{thm:modulation}
    There exists $\delta > 0$ such that for all $k > 0$, and $q,r \in M_{2,p}$, $p \in [1,\infty)$ with $k^{-\frac1p}(\|q\|_{M_{2,p}} + \|r\|_{M_{2,p}}) \leq \delta$ there exists a unique left Jost solution of the equation
    \[
        \psi_x = U\psi
    \]
    with asymptotics \eqref{eq:Jostasymptotics}. It satisfies $e^{-kx}\Phi_1^- \in M_{\infty,1} \times M_{2,1} \times M_{2,1}$. Moreover, for $k > 0$ the map
    \[
        \{\|q\|_{M_{2,p}} + \|r\|_{M_{2,p}} \leq k^{\frac1p}\delta \} \subset (M_{2,p})^2 \to \C, \qquad (q,r) \mapsto T^{-1}(k,q,r)
    \]
    is analytic.
\end{theorem}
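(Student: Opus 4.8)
The plan is to follow the proof of Theorem~\ref{thm:jostsolutionslowreg} essentially verbatim, replacing the Young-inequality bounds \eqref{eq:est1}--\eqref{eq:est2} and the space $X_k$ by their modulation-space counterparts. First I would set $\phi = e^{-kx}\psi$ and reduce the Jost equation to the integral system \eqref{eq:systemphi}, written as $\phi = S\phi + (1,0,0)^t$ with the same matrix operator $S$. The fixed point would be sought in $X_k = (M_{\infty,1}\cap C^0_0)\times(M_{2,1}\cap C^0_0)\times(M_{2,1}\cap C^0_0)$, equipped with a norm carrying $k$-dependent weights on the components, exactly as the $k^{1/2}$-weights appear in Theorem~\ref{thm:jostsolutionslowreg}. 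Here one uses that $M_{r,1}\cap C^0_0$ is a closed subspace of $M_{r,1}$, so that $X_k$ is a Banach space and the asymptotics \eqref{eq:Jostasymptotics} are encoded by the membership in $C^0_0$ together with the source term $(1,0,0)^t$. Once the iteration converges in $X_k$, analyticity of $(q,r)\mapsto T^{-1}$ follows from \eqref{def:Tlimit} and the analytic dependence of the Neumann series on $(q,r)$, just as before.

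The technical input consists of the modulation-space analogues of \eqref{eq:est1}--\eqref{eq:est2} together with bilinear (product) estimates. For the resolvents I would compute the modulation norms of the convolution kernels $g_\eta(x)=e^{\eta x}\chi_{\{x>0\}}$ with $\real\eta<0$, $|\eta|\sim k$, which obey $\|g_\eta\|_{M_{a,b}}\sim k^{1/b-1}$ for $b>1$ (and $\infty$ for $b=1$); combined with the modulation-space Young inequality this yields $\|(\partial-\eta)^{-1}f\|_{M_{2,1}}\lesssim k^{-1/p}\|f\|_{M_{2,p}}$ and the analogous $M_{1,p}\to M_{2,1}$ bound. For $\partial^{-1}$, whose symbol decays like $1/|n|$ on the $n$-th frequency block with no $k$-gain, one gets boundedness $\partial^{-1}\colon M_{1,p}\to M_{\infty,1}$ precisely because $(1/|n|)_n\in\ell^{p'}$ for $p<\infty$; this is where the restriction $p\in[1,\infty)$ enters, mirroring the exclusion of $p=\infty$ above. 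Finally, the products are handled by the modulation multiplication rule (Hölder in the Lebesgue index, Young in the frequency index), giving e.g. $\|q\phi\|_{M_{1,p}}\lesssim\|q\|_{M_{2,p}}\|\phi\|_{M_{2,1}}$ and $\|q\phi\|_{M_{2,p}}\lesssim\|q\|_{M_{2,p}}\|\phi\|_{M_{\infty,1}}$.

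The main obstacle is to assemble these into the contraction estimate $\|S\phi\|_{X_k}\lesssim k^{-1/p}(\|q\|_{M_{2,p}}+\|r\|_{M_{2,p}})\|\phi\|_{X_k}$ with the sharp power of $k$. The difficulty is structural: in the $L^2$ proof the regularizing effect of the resolvents is visible as a gain of powers of $k$ across Lebesgue exponents in \eqref{eq:est2}, whereas in modulation spaces the Bernstein constants on unit frequency blocks are $O(1)$ and all of the $k$-smoothing is funneled into the frequency-summability index. Since multiplication by a potential in $M_{2,p}$ degrades this index from $1$ to $p$, one cannot naively preserve summability $1$ through the iteration, and the weights balancing the non-smoothing operator $\partial^{-1}$ (feeding $\phi_1$) against the smoothing resolvents (feeding $\phi_2,\phi_3$) must be chosen very carefully; I expect that closing the estimate at the stated threshold ultimately requires exploiting the causal, Volterra nature of $S$ — each entry integrates from $-\infty$ — rather than a crude bound on the composed operator. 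The remaining points (the $C^0_0$ asymptotics, uniqueness from the Neumann series, and analyticity) are then routine adaptations of Theorem~\ref{thm:jostsolutionslowreg}.
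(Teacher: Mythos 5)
Your technical ingredients coincide with the paper's, so let me first record the correspondence. The paper proves the resolvent bound $\|(\partial-k(\omega^2-1))^{-1}\|_{M_{2,p}\to M_{2,1}}\lesssim_p k^{-1/p}$ by a direct block-wise Plancherel/H\"older computation; your derivation via the kernel norms $\|g_\eta\|_{M_{1,p'}}\sim k^{1/p'-1}=k^{-1/p}$ and Young's convolution inequality is an equivalent route. It proves \eqref{eq:modulationintegration}, i.e. $\partial^{-1}\colon M_{1,p}\to M_{\infty,1}$, the way you indicate --- note, though, that the $n=0$ block is not covered by the symbol decay $1/|n|$ you invoke; the paper handles it separately by the trivial bound $\|\partial^{-1}\square_0 f\|_{L^\infty}\leq\|\square_0 f\|_{L^1}$. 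It uses the same product estimates, and it then iterates in exactly your space $X_k$ with the norm $\|\phi_1\|_{M_{\infty,1}}+k^{1/p}(\|\phi_2\|_{M_{2,1}}+\|\phi_3\|_{M_{2,1}})$. In particular, the paper does \emph{not} exploit the Volterra structure: it closes with precisely the ``crude bound on the composed operator'' that you distrust, asserting the iteration works ``similar to Theorem \ref{thm:jostsolutionslowreg}''.

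Your distrust of that last step is substantive, and it is the one place where your proposal is genuinely incomplete (and where the paper's write-up is too quick). Set $\epsilon=\|q\|_{M_{2,p}}+\|r\|_{M_{2,p}}$. The row feeding $\phi_1$ contributes $\lesssim\epsilon(\|\phi_2\|_{M_{2,1}}+\|\phi_3\|_{M_{2,1}})\leq k^{-1/p}\epsilon\,\|\phi\|_{X_k}$, which is fine; but for the rows feeding $\phi_2,\phi_3$ one gets, after multiplying by the weight $k^{1/p}$,
\[
  k^{1/p}\|R(r\phi_1+q\phi_3)\|_{M_{2,1}}\lesssim k^{1/p}\cdot k^{-1/p}\epsilon\big(\|\phi_1\|_{M_{\infty,1}}+\|\phi_3\|_{M_{2,1}}\big)\leq \epsilon\big(1+k^{-1/p}\big)\|\phi\|_{X_k},
\]
i.e. the $k^{-1/p}$ gain of the resolvent is exactly eaten by the weight. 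So this assembly yields a contraction constant $\sim\epsilon$, not $\sim k^{-1/p}\epsilon$; equivalently, the weight-independent loop $\phi_1\to\phi_2\to\phi_1$ has gain $\sim k^{-1/p}\epsilon^2$, and both factors in it are sharp for these estimates (the resolvent bound is saturated when the relevant frequencies are spread over the $\sim k$ unit blocks nearest $\xi=-\tfrac{\sqrt3}{2}k$, and $\partial^{-1}$ gains nothing when $q\phi_2$ has frequencies near zero). Under the stated hypothesis $\epsilon\leq\delta k^{1/p}$ these quantities can be as large as $\delta k^{1/p}$, resp. $\delta^2 k^{1/p}$, hence are not small when $k$ is large: the crude assembly proves the theorem only under the stronger condition $\epsilon\leq\delta\min(1,k^{1/p})$, or $\epsilon\leq\delta k^{1/(2p)}$ if one optimizes the weight to $k^{1/(2p)}$. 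Reaching the stated threshold $k^{-1/p}\epsilon\leq\delta$ uniformly in $k>0$ would require a genuinely finer argument --- your Volterra suggestion is a natural candidate, but you do not carry it out, and neither does the paper. So either prove the fixed point under the corrected smallness condition (which is what these estimates actually give, and suffices for $k\leq1$), or supply the missing refinement; as it stands, this step is a gap in your proposal, one that it shares with the paper's own proof.
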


\begin{proof}
    We are again reduced to the system \eqref{eq:systemphi}, but we can only estimate $q,r \in M_{2,q}$. We claim first that if $R = (\partial - k(\omega^2 - 1))^{-1}$, the estimate
    \[
        \|R\|_{M_{2,p} \to M_{2,1}} \lesssim_p k^{-\frac1p}
    \]
    holds. Now indeed, with $\omega^2 - 1 = - \frac32 - \frac{\sqrt{3}}{2}i = a + ib$,
    \begin{align*}
        \|Rf\|_{M_{2,1}} &= \sum_{n \in \Z}\Big(\int_n^{n+1} \frac{|\hat f(\xi)|^2}{(\xi+kb)^2 + k^2a^2}\Big)^{\frac12} \approx \sum_{n \in \Z}(n^2 + k^2a^2)^{-\frac12} \Big(\int_n^{n+1} |\hat f(\xi-kb)|^2\Big)^{\frac12}\\
        &\lesssim \Big(\sum_{n \in \Z}(n^2 + k^2a^2)^{-\frac{p'}2}\Big)^{\frac1{p'}}\Big(\sum_{n \in \Z}\Big(\int_n^{n+1} |\hat f(\xi-kb)|^2\Big)^{\frac p2}\Big)^{\frac1p}\\
        &\lesssim_p k^{-\frac1p}\|f\|_{M_{2,p}},
    \end{align*}
    where in the second line we estimated with Hölder's inequality, and in the third line we calculated the sum and estimated the characteristic function on $[n - kb,n+1-kb]$ by two unit sized characteristic functions supported between the surrounding integers. Thus,
    \[
        \|R(r\phi_1 + q\phi_3)\|_{M_{2,1}} \lesssim k^{-\frac1p}(\|r\|_{M_{2,p}} + \|q\|_{M_{2,p}})(\|\phi_1\|_{M_{\infty,1}} + \|\phi_3\|_{M_{2,1}})
    \]
    where we used both the Hölder type estimate
    \[
        \|fg\|_{M_{r,p}} \lesssim \|f\|_{M_{r_1,p_1}}\|g\|_{M_{r_2,p_2}}, \qquad \frac1p = \frac1{p_1} + \frac1{p_2}, 1 + \frac1{q} = \frac1{q_1} + \frac1{q_2}
    \]
    as well as the continuous embedding $M_{2,1} \subset M_{\infty,1}$. In a similar fashion we estimate
    \[
        \|(\partial - k(\omega - 1)^{-1}(q\phi_1 + r\phi_2)\|_{M_{2,1}} \lesssim k^{-\frac1p}(\|r\|_{M_{2,p}} + \|q\|_{M_{2,p}})(\|\phi_1\|_{M_{\infty,1}} + \|\phi_2\|_{M_{2,1}}).
    \]
    It remains to show that for all $p < \infty$,
    \begin{equation}\label{eq:modulationintegration}
        \Big\|\int_{-\infty}^x f(y)\, dy\Big\|_{M_{\infty,1}} \lesssim \|f\|_{M_{1,p}}.
    \end{equation}
    Indeed, then \eqref{eq:modulationintegration} implies that
    \[
        \|\partial^{-1}(q\phi_2 + r\phi_3)\|_{M_{\infty,1}} \lesssim (\|r\|_{M_{2,p}} + \|q\|_{M_{2,p}})(\|\phi_2\|_{M_{2,1}} + \|\phi_3\|_{M_{2,1}})
    \]
    after another use of the Hölder-type embedding, thus we can iterate in the Banach space $X_k = (M_{\infty,1} \cap C^0_0) \times (M_{2,1} \cap C^0_0) \times (M_{2,1} \cap C^0_0)$ with norm
    \[
        \|\phi\|_{X_k} = \|\phi_1\|_{M_{\infty,1}} +k^{\frac1p}(\|\phi_2\|_{M_{2,1}} +\|\phi_3\|_{M_{2,1}})
    \]
    similar to Theorem \ref{thm:jostsolutionslowreg}.

    To prove \eqref{eq:modulationintegration} we split into the frequency around zero,
    \[
        \|\partial_x^{-1}\square_0 f\|_{L^\infty} \leq \|\square_0 f\|_{L^1},
    \]
    which is a simple consequence of the triangle inequality, and the non-zero frequencies,
    \begin{align*}
        \sum_{n \neq 0} \|\partial_x^{-1}\square_n f\|_{L^\infty} &\leq \sum_{n \neq 0} \|\partial_x^{-1}\square_n f\|_{L^2} \\
        &= \sum_{n \neq 0} \Big(\int_n^{n+1} \frac{|\hat f(\xi)|^2}{\xi^2}\, d\xi\Big)^{\frac12}\\
        &\approx \sum_{n \neq 0} |n|^{-1}\|\square_n f\|_{L^2}\\
        &\lesssim_p \Big(\sum_{n \neq 0} \|\square_n f\|_{L^1}^p\Big)^{\frac1p},
    \end{align*}
    where in the first line we used Bernstein's inequality, in the second line we used that for $g \in \Sc(\R)$ with $\hat g(\xi)$ supported away from zero one has $\mathcal{F}(\partial^{-1}g)(\xi) = \hat g(\xi)/(i\xi)$, in the third line we approximated $\xi \approx n$ and in the last line we used Bernstein and Hölder.
\end{proof}

\section{The Transmission Coefficient as a renormalized Fredholm determinant}\label{sec:equality1}

In this section we prove that the transmission coefficient has a representation as a renormalized Fredholm determinant.

To do so we need some notation. Given the Lax operator $L$,
\[
    L(k,q,r) = \left(\begin{matrix}\partial - k & -q & -r\\-r & \partial - \omega^2 k & -q\\ -q & -r & \partial - \omega k  \end{matrix}\right),
\]
we define $L_0(k) = L(k,0,0) = \partial - kJ$, and its inverse $R_0 = L_0^{-1}$. The experience with KdV, NLS, mKdV and dNLS (\cite{kvz,klaus}) tells us that we should look at the Fredholm determinant
\[
    \log {\det} (1+R_0(L-L_0)) = \log {\det} (1-R_0U_0).
\]
Unfortunately, the operator
\[
    \tilde \Lambda = R_0(L-L_0) = -\left(\begin{matrix}0 & R_1q & R_1r\\R_2r & 0 & R_2q\\ R_3q & R_3r & 0  \end{matrix}\right),
\]
where
\[
    R_1 = (\partial-k)^{-1}, \qquad R_2 = (\partial-\omega^2k)^{-1}, \qquad R_3 = (\partial-\omega k)^{-1},
\]
is not trace-class, and Hilbert-Schmidt only in the case of $q,r \in L^2$. The first issue can be dealt with by looking at the renormalization of the Fredholm determinant (and because formally at least, $\tr(\tilde \Lambda) = 0$, so that this renormalization gives formally the same quantity), whereas for the second issue we use that $\tilde \Lambda$ is equivalent to $\Lambda = \sqrt{R_0}(L-L_0)\sqrt{R_0}$. Thus we define
\begin{equation}\label{eq:defA}
    A(k,q,r) = \log {\det}_2 (1+\Lambda),
\end{equation}
where
\begin{equation}\label{eq:deflambda}
    \Lambda = -\left(\begin{matrix}0 & R_1^{\frac12}qR_2^{\frac12} & R_1^{\frac12}rR_3^{\frac12}\\R_2^{\frac12}rR_1^{\frac12} & 0 & R_2^{\frac12}qR_3^{\frac12}\\ R_3^{\frac12}qR_1^{\frac12} & R_3^{\frac12}rR_2^{\frac12} & 0  \end{matrix}\right).
\end{equation}
Note that when $q,r \in L^2$,
\[
    A(k,q,r) = \log {\det}_2 (1+R_0(L-L_0)),
\]
and moreover, by Jacobi's formula
\begin{equation}\label{eq:logdettrace}
    A(k,q,r) = \sum_{l=2}^\infty \frac{(-1)^{l+1}}{l}\tr(\Lambda^l)
\end{equation}
With these definitions we prove:

\begin{lemma}
    Given $k > 0$, $q,r \in \Sc(\R)$, for all $1 \leq i,j \leq 3$ the operators $R_i^{\frac12}q R_j^{\frac12}$ are Hilbert-Schmidt on $L^2(\R)$ and satisfy
    \begin{equation}\label{eq:HSnorm}
        \|R_i^{\frac12}q R_j^{\frac12}\|_{\I_2}^2 \approx \int_\R \log\Big(4+ \frac{\xi^2}{k^2}\Big)\frac{|\hat q(\xi)|^2}{(\xi^2 + k^2)^{\frac12}}\, d\xi.
    \end{equation}
    Moreover, $\Lambda$ is Hilbert-Schmidt on $L^2(\R)^3$ with
    \begin{equation}\label{eq:HSnorm2}
        \|\Lambda\|_{\I_2}^2 \approx \int_\R \log\Big(4+ \frac{\xi^2}{k^2}\Big)\frac{|\hat q(\xi)|^2 + |\hat r(\xi)|^2}{(\xi^2 + k^2)^{\frac12}}\, d\xi.
    \end{equation}
\end{lemma}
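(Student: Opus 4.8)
The plan is to compute every Hilbert--Schmidt norm on the Fourier side, where each factor $R_i^{\frac12}$ becomes a Fourier multiplier. Set $z_1 = k$, $z_2 = \omega^2 k$, $z_3 = \omega k$ and $m_i(\xi) = (i\xi - z_i)^{-\frac12}$, so that $R_i^{\frac12}$ is multiplication by $m_i$ in frequency. Conjugating $R_i^{\frac12}qR_j^{\frac12}$ by the (unitary) Fourier transform produces the integral kernel $\tilde K_{ij}(\xi,\eta) = m_i(\xi)\hat q(\xi-\eta)m_j(\eta)$, up to a fixed factor coming from the normalization of the convolution. Since the Hilbert--Schmidt norm is the $L^2$-norm of the kernel and is unitarily invariant, I would obtain
\[
    \|R_i^{\frac12}qR_j^{\frac12}\|_{\I_2}^2 \approx \iint_{\R^2} |m_i(\xi)|^2 |\hat q(\xi-\eta)|^2 |m_j(\eta)|^2\, d\xi\, d\eta.
\]
The substitution $\zeta = \xi - \eta$ together with Fubini then separates the $\hat q$ factor and reduces the statement to a sharp estimate of the weight $I_{ij}(\zeta) = \int_\R |m_i(\eta+\zeta)|^2 |m_j(\eta)|^2\, d\eta$, namely $\|R_i^{\frac12}qR_j^{\frac12}\|_{\I_2}^2 \approx \int_\R |\hat q(\zeta)|^2 I_{ij}(\zeta)\, d\zeta$.

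To estimate $I_{ij}$ I would write $z_i = a_i + ib_i$, so that $|m_i(\xi)|^2 = (a_i^2 + (\xi - b_i)^2)^{-\frac12}$, and observe the crucial identity $a_i^2 + b_i^2 = |z_i|^2 = k^2$ for each $i$ (as $|\omega| = 1$). Hence $I_{ij}$ is, after a shift, the convolution of two Cauchy-type profiles $(a_i^2 + \cdot^2)^{-\frac12}$ and $(a_j^2 + \cdot^2)^{-\frac12}$ with widths $a_i, a_j \in \{k, k/2\}$, evaluated at $\zeta + (b_j - b_i)$ with $|b_j - b_i| \lesssim k$. Rescaling by $k$ (setting $\eta = ks$, $\zeta = kt$) reduces everything to the model convolution
\[
    g(t) = \int_\R (1 + (s+t)^2)^{-\frac12}(1 + s^2)^{-\frac12}\, ds,
\]
for which $I_{ij}(\zeta) \approx k^{-1} g(\zeta/k)$. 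I would then estimate $g$ directly: $g(0) = \pi$, and for $|t|$ large the integrand concentrates near $s = 0$ and $s = -t$, so splitting the integral into these two regions and using that $\int (1 + s^2)^{-\frac12}\, ds$ grows logarithmically over an interval of length $\sim |t|$ gives the two-sided bound $g(t) \approx \log(4 + t^2)(1 + t^2)^{-\frac12}$. Undoing the scaling yields $I_{ij}(\zeta) \approx \log(4 + \zeta^2/k^2)(k^2 + \zeta^2)^{-\frac12}$, uniformly in $i,j$, which is exactly the weight appearing in \eqref{eq:HSnorm}.

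Inserting this bound into the formula for the norm gives \eqref{eq:HSnorm}, and the same computation with $q$ replaced by $r$ handles the $r$-entries. For \eqref{eq:HSnorm2} I would use that the Hilbert--Schmidt norm squared of the block operator $\Lambda$ on $L^2(\R)^3$ is the sum of the Hilbert--Schmidt norms squared of its six off-diagonal entries; three of them carry $q$ and three carry $r$, so summing \eqref{eq:HSnorm} over the entries produces \eqref{eq:HSnorm2}, the number of entries being absorbed into the implicit constant. The main obstacle is the sharp two-sided logarithmic estimate of the model convolution $g$: both the upper bound and, especially, the matching lower bound require careful bookkeeping of the two concentration regions, and one must check that the varying widths $a_i \in \{k, k/2\}$ and the relative shifts $b_j - b_i = O(k)$ only change the implicit constants. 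The latter follows because $\log(4 + t^2)(1 + t^2)^{-\frac12}$ is slowly varying, so translating its argument by a bounded amount alters it by at most a bounded factor.
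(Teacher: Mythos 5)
Your proposal is correct and follows essentially the same route as the paper: both pass to the Fourier side via Plancherel/unitarity of the Fourier transform, reduce the Hilbert--Schmidt norm to the double integral $\iint |\hat q(\xi-\eta)|^2\,w_i(\xi)^{-1}w_j(\eta)^{-1}\,d\xi\,d\eta$ with Cauchy-type weights, and conclude with the two-sided bound $\int \big((x+y)^2+1\big)^{-\frac12}\big((x-y)^2+1\big)^{-\frac12}dx \approx \log(4+4y^2)(y^2+1)^{-\frac12}$, which the paper simply cites from Killip--Vi\c{s}an--Zhang (Lemma 4.1) and which you propose to re-prove by splitting into the two concentration regions. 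The only cosmetic difference is how the linear frequency shift coming from $\omega,\omega^2$ is absorbed: the paper uses the inequality $\sqrt3 k\eta \leq \tfrac{\sqrt3}{2}(k^2+\eta^2)$, while you rescale and invoke the slowly varying character of the weight; both are fine.
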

\begin{proof}
    We begin by proving \eqref{eq:HSnorm}. For simplicity we consider $i = 1, j =2$, the other cases are proven similarly. We use Plancherel and the fact that $\omega^2 - \omega = \sqrt3 i$ to write
    \begin{align*}
        \|R_1^{\frac12}q R_2^{\frac12}\|_{\I_2}^2 &= \tr((R_1^*R_1)^{\frac12}q (R_2R_2^*)^{\frac12}\bar q)\\
        &= \tr((-\partial^2 + k^2)^{\frac12}q (-\partial^2 -\sqrt3 i k \partial + k^2)^{\frac12}\bar q)\\
        &= \int_\R \int_\R \frac{|\hat q(\xi - \eta)|^2}{(\xi^2 + k^2)^\frac12(\eta^2 + k^2 + \sqrt 3 k\eta)^\frac12}\, d\xi d\eta.
    \end{align*}
    The assertion now follows from the fact that
    \[
        \sqrt 3 k\eta \leq \frac{\sqrt 3}2(k^2 + \eta^2), \qquad \frac{\sqrt 3} 2 < 1,
    \]
    and
    \[
        \int \frac{dx}{((x+y)^2 + 1)^\frac12((x-y)^2 + 1)^\frac12} \approx \frac{\log(4 + 4y^2)}{(y^2 + 1)^\frac12},
    \]
    see also \cite[Lemma 4.1]{kvz}.
    
    \eqref{eq:HSnorm2} now follows from \eqref{eq:HSnorm} because
    \[
        \tr(\Lambda^* \Lambda) = \sum_{\sigma \in A_3}  \|R_{\sigma(2)}^{\frac12}q R_{\sigma(1)}^{\frac12}\|_{\I_2}^2 + \|R_{\sigma(1)}^{\frac12}r R_{\sigma(2)}^{\frac12}\|_{\I_2}^2,
    \]
    where $A_3$ is the alternating group of order $3$ (that is, $q$ is associated with the tuples $21, 32, 13$ and $r$ with $12, 23, 31$).
\end{proof}

As an important consequence of \eqref{eq:HSnorm2} we see that
\begin{equation}\label{eq:microlocalestimate}
    \|\Lambda\|_{\I_2}\lesssim k^{-\frac12-s}\|(q,r)\|_{H^s_k}, \quad s > -\frac12,
\end{equation}
and hence the series in \eqref{eq:logdettrace} converges geometrically for all $q,r \in H^s, s > -\frac12$ under the condition
\begin{equation}\label{eq:smallness}
    \|(q,r)\|_{H^s} \ll k^{\frac12+s}.
\end{equation}
For small data, the main contribution in \eqref{eq:logdettrace} comes from the quadratic term. The coercivity of the quadratic part in the case $r = \bar q$ was the key to prove low regularity a priori estimates for KdV, NLS and dNLS, and we have a coercivity here as well. More precisely:

\begin{lemma}
    The quadratic term in \eqref{eq:logdettrace} is
    \begin{equation}\label{eq:quadratic}
        \tr(\Lambda^2) = -3k \langle (-\partial^2 +3k^2 - \sqrt{3}ik \partial)^{-1}q, \bar r\rangle
    \end{equation}
    In the case $r = \bar q$ this quantity is coercive in the sense that
    \begin{equation}\label{eq:quadratic1}
        \tr(\Lambda^2) = \int_\R \frac{-3k|\hat q(\xi)|^2}{\xi^2 + 3k^2 - \sqrt{3}k\xi} \, d\xi \approx -\int_\R \frac{k|\hat q(\xi)|^2}{\xi^2 + 3k^2} \, d\xi.
    \end{equation}
\end{lemma}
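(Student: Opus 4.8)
The plan is to reduce $\tr(\Lambda^2)$ to scalar traces of products of two resolvents and two potentials, to evaluate these by the explicit integral kernels, and then to simplify using the arithmetic of $\omega$. Writing $\Lambda = -M$ with $M$ the block matrix in \eqref{eq:deflambda}, the overall sign squares away and by \eqref{eq:higherdimensionaltrace} one has $\tr(\Lambda^2) = \tr_{L^2(\R)^3}(M^2) = \sum_{i=1}^3 \tr\big((M^2)_{ii}\big)$. Each diagonal block $(M^2)_{ii}$ is a sum of two products $M_{ij}M_{ji}$; in every such product the two half-resolvents $R_\ell^{1/2}$ that meet in the middle fuse into a full $R_\ell$, and the two outer half-resolvents fuse under cyclicity of the trace. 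Cyclicity then collapses the six resulting terms onto the three inequivalent traces $\tr(R_1 q R_2 r)$, $\tr(R_1 r R_3 q)$ and $\tr(R_2 q R_3 r)$, each occurring twice.

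The first key observation is that $\tr(R_2 q R_3 r)=0$. Indeed, the kernel of $R_2 q R_3 r$ is $\int K_2(x,y)q(y)K_3(y,z)r(z)\,dy$, so the trace is $\int\!\!\int K_2(x,y)K_3(y,x)\,q(y)r(x)\,dy\,dx$; since both $K_2$ and $K_3$ are supported on $\{x>y\}$, the product $K_2(x,y)K_3(y,x)$ forces simultaneously $x>y$ and $y>x$ and therefore vanishes identically. This is the structural reason the third term drops out, reflecting that $R_2$ and $R_3$ propagate in the same direction while $R_1$ propagates in the opposite one. For the two surviving traces I would insert the explicit kernels and the Fourier representation of $q,r$; the double spatial integral produces a $\delta(\xi+\eta)$ constraint in frequency (equivalently, one evaluates the remaining one-dimensional integral $\int m_1(\eta+\zeta)m_j(\eta)\,d\eta$ of the symbols by residues, the relevant pole lying in the upper half-plane), yielding, after the substitution $\zeta\mapsto-\zeta$ in the second,
\[
    \tr(R_1 q R_2 r)\sim \int \frac{\hat q(\zeta)\hat r(-\zeta)}{i\zeta+(\omega^2-1)k}\,d\zeta,\qquad \tr(R_1 r R_3 q)\sim \int \frac{\hat q(\zeta)\hat r(-\zeta)}{-i\zeta+(\omega-1)k}\,d\zeta.
\]

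Adding the two integrands over a common denominator and using $\omega+\omega^2=-1$, $\omega-\omega^2=\sqrt3\,i$, $\omega^3=1$ and hence $(\omega-1)(\omega^2-1)=3$, the numerator collapses to the constant $-3k$ while the denominator becomes $\zeta^2+3k^2-\sqrt3\,k\zeta$; this produces \eqref{eq:quadratic}, and specializing $r=\bar q$ (so that $\hat r(-\zeta)=\overline{\hat q(\zeta)}$ and the pairing becomes $|\hat q|^2$) gives the first equality in \eqref{eq:quadratic1}. I expect the main obstacle to be precisely this complex bookkeeping: one must keep the pole locations consistent with the support directions of the kernels, and then verify that the two complex rational functions recombine into a single real, sign-definite symbol (so that for $r=\bar q$ the output is genuinely a real coercive quantity rather than merely formally so). The comparison $\approx$ in \eqref{eq:quadratic1} is then the same elementary estimate used in \eqref{eq:HSnorm}: since $\sqrt3\,k|\zeta|\le\tfrac{\sqrt3}{2}(\zeta^2+k^2)$ with $\tfrac{\sqrt3}{2}<1$, the denominator $\zeta^2+3k^2-\sqrt3\,k\zeta$ is bounded above and below by positive multiples of $\zeta^2+3k^2$, which simultaneously shows the symbol is comparable to $k/(\zeta^2+3k^2)$ and that the quadratic form is negative definite, that is, coercive.
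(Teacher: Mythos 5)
Your proposal is correct and follows essentially the same route as the paper: the same reduction of $\tr(\Lambda^2)$ to the three scalar traces via cyclicity, the same key observation that $\tr(R_2 q R_3 r)=0$ because both kernels are supported on $\{x>y\}$, the same recombination of the two surviving resolvent pairings using $\omega+\omega^2=-1$ and $\omega-\omega^2=\sqrt3\,i$, and the same elementary AM--GM comparison for the coercivity. The only cosmetic difference is that you evaluate the two surviving traces in Fourier space by residues, whereas the paper integrates the explicit kernels in physical space and recognizes each trace as a pairing $\langle(\partial-ck)^{-1}q,\bar r\rangle$ before passing to Fourier variables.
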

\begin{proof}
    First note that by an explicit calculation,
    \[  
        \tr(\Lambda^2) = \tr(\tilde \Lambda^2) = 2\tr(R_1q R_2 r + R_2 q R_3 r + R_3 q R_1 r).
    \]
    When assuming $k > 0$ the operators $R_i$ have integral kernels 
    \begin{align*}
        K_1(x,y) &= - \chi_{\{x < y\}} e^{k(x-y)},\\
        K_2(x,y) &= \chi_{\{x > y\}} e^{\omega k(x-y)},\\
        K_3(x,y) &= \chi_{\{x > y\}} e^{\omega^2 k(x-y)},
    \end{align*}
    where $\chi_A$ denotes the characteristic function on the set $A$. Hence,
    \begin{align*}
        \tr(R_1 q R_2 r) &= \int_{\R^2} K_1(x,y) q(y) K_2(y,x) r(x)\, dydx \\
            &=  - \int_{x < y} e^{(1-\omega)k(x-y)}q(y) r(x) \, dydx\\
            &= \langle (\partial - (1-\omega)k)^{-1}q, \bar r\rangle,
    \end{align*}
    where $\langle \cdot, \cdot \rangle$ denotes the $L^2$ scalar product for complex valued functions. In a similar fashion,
    \begin{align*}
        \tr(R_3 q R_1 r) &= \int_{\R^2} K_3(x,y) q(y) K_1(y,x) r(x)\, dydx \\
            &=  - \int_{x > y} e^{(\omega^2-1)k(x-y)}q(y) r(x) \, dydx\\
            &= -\langle (\partial - (\omega^2-1)k)^{-1}q, \bar r\rangle,
    \end{align*}
    and
    \[
        \tr(R_2 q R_3 \bar r) = 0,
    \]
    because the supports of $K_2(x,y)$ and $K_3(y,x)$ do not overlap. Thus,
    \[
        \tr(\Lambda^2) = -3k \langle (-\partial^2 +3k^2 - \sqrt{3}ik \partial)^{-1}q, \bar r\rangle
    \]
    where we used $1 + \omega + \omega^2 = 0$ and $\omega - \omega^2 = \sqrt{3}i$.
    Thus by Plancharel, if $r = \bar q$,
    \[
        \tr(\Lambda^2) = \int_\R \frac{-3k}{\xi^2 + 3k^2 + \sqrt{3}k\xi}|\hat q(\xi)|^2 \, d\xi.
    \]
    For the last part we simply use the estimate
    \[
        \sqrt{3}k\xi \leq \frac12 \xi^2 + \frac32 k^2.
    \]
\end{proof}

Although not immediately needed, we present here the form of the terms of higher homogeneity. They can be used to obtain the asymptotic expansion of $A$ and thus the higher order energies for \eqref{eq:qdNLS}. We skip the proof as it is a simple calculation.
\begin{lemma}
    Then homogeneous terms of order three and four in \eqref{eq:logdettrace} satisfy
    \begin{align*}\label{eq:higherorder}
        \tr(\Lambda^3) &= - 3\tr(R_1 q R_2 q R_3 q + R_1 r R_2 r R_3 r),\\
        \tr(\Lambda^4) &= \tr\big((R_1 q R_2 r + R_1 r R_3 q)^2 + (R_2 q R_3 r + R_2 r R_1 q)^2 + (R_3 q R_1 r + R_3 r R_2 q)^2 \big)\\
                        &\quad +2\tr(R_1 q R_2 q R_3 r R_2 r + R_2 q R_3 q R_1 r R_3 r + R_3 q R_1 q R_2 r R_1 r).
    \end{align*}
\end{lemma}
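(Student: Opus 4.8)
The plan is to reduce everything to a finite block-matrix computation, exactly as was done for the quadratic term. The key observation is the similarity invariance $\tr(\Lambda^n)=\tr(\tilde\Lambda^n)$, valid for all $n\ge 2$: writing $V=L-L_0$ and moving one factor $\sqrt{R_0}$ cyclically around the trace gives $\tr\big((\sqrt{R_0}V\sqrt{R_0})^n\big)=\tr\big((R_0 V)^n\big)$, precisely as in the computation $\tr(\Lambda^2)=\tr(\tilde\Lambda^2)$ used for \eqref{eq:quadratic}. Since $\Lambda\in\I_2$, the powers $\Lambda^3=\Lambda\cdot\Lambda^2$ and $\Lambda^4=(\Lambda^2)^2$ are trace class, so all these manipulations are legitimate for $q,r\in\Sc(\R)$. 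Writing $\tilde\Lambda=-M$ with $M$ the off-diagonal block matrix with entries $M_{12}=R_1q$, $M_{23}=R_2q$, $M_{31}=R_3q$, $M_{13}=R_1r$, $M_{21}=R_2r$, $M_{32}=R_3r$, the block-trace identity \eqref{eq:higherdimensionaltrace} gives
\[
    \tr(\tilde\Lambda^n)=(-1)^n\sum_{i=1}^3\tr_{L^2}\big[(M^n)_{ii}\big],\qquad (M^n)_{ii}=\sum_{j_1,\dots,j_{n-1}}M_{ij_1}M_{j_1j_2}\cdots M_{j_{n-1}i},
\]
where, because $M$ has zero diagonal, only index strings with all consecutive pairs distinct contribute.

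It is convenient to read $M$ as a weighted adjacency matrix on the vertices $\{1,2,3\}$: the factor $q$ lives on the directed cycle $1\to2\to3\to1$ and the factor $r$ on the reverse cycle $1\to3\to2\to1$. Then $(M^n)_{ii}$ is a sum over closed walks of length $n$ based at $i$, each contributing the ordered product of the attached operators $R_\bullet q$ or $R_\bullet r$, and the only remaining simplification is to exploit cyclicity of the $L^2(\R)$-trace to collapse the three base points. For $n=3$ a residue count modulo $3$ shows a triangle must use three $q$-edges or three $r$-edges, so each diagonal block contributes one all-$q$ triangle, giving $R_1qR_2qR_3q$, and one all-$r$ triangle, giving the product of the three operators $R_ir$ taken in the order prescribed by the reverse cycle. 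The three all-$q$ traces coincide by cyclicity, as do the three all-$r$ traces, producing the factor $3$; together with the sign $(-1)^3=-1$ this is the claimed formula for $\tr(\Lambda^3)$.

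For $n=4$ the same residue argument forces exactly two $q$-edges and two $r$-edges, so no $q^4$ or $r^4$ term survives. I would split the admissible closed $4$-walks based at $i$ according to whether they return to $i$ already after two steps. The walks $i\to a\to i\to b\to i$ (with $a,b\ne i$) assemble, for each fixed base point, into a single perfect square: at $i=1$ the four such walks sum to $(R_1qR_2r+R_1rR_3q)^2$, and summing the three diagonal blocks reproduces the first line of the asserted formula verbatim, with no cyclic reduction needed. The remaining walks $i\to a\to b\to a\to i$ are of type $qqrr$; each base point yields two of them, and under cyclicity of the trace each of the three canonical terms $R_1qR_2qR_3rR_2r$, $R_2qR_3qR_1rR_3r$, $R_3qR_1qR_2rR_1r$ is produced by exactly two of the three diagonal blocks, which is the source of the coefficient $2$ in the second line.

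The computation requires no hard analysis; the single error-prone step is the $n=4$ bookkeeping. One must enumerate the six closed $4$-walks from each vertex, attach the correct operator to every edge — keeping in mind that $R_2$ sits on the $2\to3$ edge when it carries $q$ but on the $2\to1$ edge when it carries $r$, and similarly for $R_1,R_3$ — and then check that cyclicity of the trace regroups the eighteen raw terms into precisely the three squares and the doubled $qqrr$-triple, with no leftover or double-counted contribution. Verifying these multiplicities (the perfect squares and the factor $2$) is the only place where care is needed.
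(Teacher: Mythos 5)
The paper does not actually contain a proof of this lemma (it is explicitly skipped as ``a simple calculation''), so your proposal must be judged on its own merits. Your framework is the right one and is surely the intended calculation: passing from $\Lambda$ to $\tilde\Lambda=-M$ by cyclicity of the trace, using \eqref{eq:higherdimensionaltrace}, and reading $(M^n)_{ii}$ as a sum over closed walks on $\{1,2,3\}$ with $q$ carried by the cycle $1\to2\to3\to1$ and $r$ by the reverse cycle. Your quartic bookkeeping is correct in every detail: the walks returning to the base point after two steps assemble into the three squares, the remaining $qqrr$ walks produce each of the three canonical terms from exactly two diagonal blocks (hence the factor $2$), and the mod-$3$ parity argument correctly rules out all other distributions of $q$ and $r$.

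The flaw is in the last sentence of your $n=3$ paragraph. Your own rule gives, for the all-$r$ triangle at base point $1$, the product in reverse-cycle order, $M_{13}M_{32}M_{21}=R_1r\,R_3r\,R_2r$, and you then assert that this ``is the claimed formula'', whose $r$-term is $R_1r\,R_2r\,R_3r$. These do not agree: $(1,3,2)$ is not a cyclic rotation of $(1,2,3)$, and the corresponding traces genuinely differ. Concretely, both trilinear forms are supported on $\{x_1<x_3<x_2\}$, where their kernels are $-\exp\big(\tfrac32 k(x_1-x_2)\pm\tfrac{\sqrt3}{2}ik(x_1+x_2-2x_3)\big)$ with opposite signs of the phase, so for real-valued $r$ the two traces are complex conjugates of one another with generically nonvanishing imaginary part; they are certainly not equal as functionals of $r$. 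What your walk computation actually proves is $\tr(\Lambda^3)=-3\tr(R_1qR_2qR_3q+R_1rR_3rR_2r)$, and this corrected form is forced by symmetry: conjugating $\tilde\Lambda$ by the permutation matrix exchanging the second and third coordinates swaps $q\leftrightarrow r$ and $R_2\leftrightarrow R_3$ while preserving all traces, so the $q$-term and the $r$-term must carry opposite cyclic orders of the resolvents, which the formula in the statement (where both terms read $R_1\cdot R_2\cdot R_3$) violates. In short: your method is sound, and carried out honestly it disproves the cubic identity as stated --- the discrepancy is presumably a typo in the paper, consistent with its inconsistent labelling of the kernels of $R_2,R_3$ between Section \ref{sec:preliminaries} and the proof of \eqref{eq:quadratic} --- but the one genuine error in your write-up is papering over this mismatch instead of flagging it.
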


The Green's function associated to the Lax operator $L$ is defined as the (by the Schwartz kernel Theorem) unique function $G(x,y)$ which is the integral kernel of $R = L^{-1}$, i.e.
\[
    L_x \int_\R G(x,y)f(y) \, dy = f(x),
\]
for a.e. $x \in \R$. The existence of such a function $G$ is ensured by the fact that $R$ is a Hilbert-Schmidt operator. Indeed, we simply write (compare to \cite[Section 3]{hgkv})
\[\begin{split}
    R -R_0 &= (L_0 + L - L_0)^{-1} - R_0  = R_0(1 + (L-L_0)R_0)^{-1} - R_0 \\
    &= \sqrt{R_0}\sum_{l=1}^\infty (-1)^l \big(\sqrt{R_0}(L-L_0)\sqrt{R_0}\big)^l \sqrt{R_0},
\end{split}\]
which is an absolutely convergent sum in the space of Hilbert Schmidt operators by \eqref{eq:HSnorm2} under the smallness condition \eqref{eq:smallness}, and can easily be checked by hand to be an inverse of $L$. In particular we can define the Green's function for $q,r \in H^s, s > -\frac12$ under the smallness condition \eqref{eq:smallness}.

The Green's function and its diagonal played a central role in the works for KdV, NLS and dNLS. Later in Lemma \ref{lem:logdetderivative} we will see that it also enters in the setting of qdNLS \eqref{eq:qdNLS} in a similar way: the functional derivatives of the logarithm of the transmission coefficient are given by sums of the off-diagonal matrix entries of the diagonal of the Green's function (i.e. its restriction to $x = y$). This relation will also allow us to prove that the inverse transmission coefficient coincides with the renormalized Fredholm determinant.

Our first result on the Green's function shows its close connection with the Jost solutions.

\begin{lemma}\label{lem:formofgreensfunction}
    Let $q,r \in L^2(\R)$. The constant multiple of the Green's function, $-T\cdot G(x,y)$, can be written as
    \begin{equation*}
        \left(\begin{smallmatrix}
            \Phi_{11}^-(x)(\Phi_{22}^+\Phi_{33}^+ - \Phi_{23}^+\Phi_{32}^+)(y) & \Phi_{11}^-(x)(\Phi_{32}^+\Phi_{13}^+ - \Phi_{33}^+\Phi_{12}^+)(y) & \Phi_{11}^-(x)(\Phi_{12}^+\Phi_{23}^+ -\Phi_{13}^+\Phi_{22}^+)(y)\\
            \Phi_{21}^-(x)(\Phi_{22}^+\Phi_{33}^+ - \Phi_{23}^+\Phi_{32}^+)(y) & \Phi_{21}^-(x)(\Phi_{32}^+\Phi_{13}^+ - \Phi_{33}^+\Phi_{12}^+)(y) & \Phi_{21}^-(x)(\Phi_{12}^+\Phi_{23}^+ -\Phi_{13}^+\Phi_{22}^+)(y)\\
            \Phi_{31}^-(x)(\Phi_{22}^+\Phi_{33}^+ - \Phi_{23}^+\Phi_{32}^+)(y) & \Phi_{31}^-(x)(\Phi_{32}^+\Phi_{13}^+ - \Phi_{33}^+\Phi_{12}^+)(y) & \Phi_{31}^-(x)(\Phi_{12}^+\Phi_{23}^+ -\Phi_{13}^+\Phi_{22}^+)(y)\end{smallmatrix}\right)
    \end{equation*}
    if $x < y$, and 
    \begin{equation*}
        \begin{split}
        \left(\begin{smallmatrix}
            \Phi_{12}^+(x)(\Phi_{21}^-\Phi_{33}^+ -\Phi_{31}^-\Phi_{23}^+)(y) & \Phi_{12}^+(x)(\Phi_{31}^-\Phi_{13}^+ -\Phi_{11}^-\Phi_{33}^+)(y) & \Phi_{12}^+(x)(\Phi_{11}^-\Phi_{23}^+ -\Phi_{21}^-\Phi_{13}^+)(y)\\
            \Phi_{22}^+(x)(\Phi_{21}^-\Phi_{33}^+ -\Phi_{31}^-\Phi_{23}^+)(y) & \Phi_{22}^+(x)(\Phi_{31}^-\Phi_{13}^+ -\Phi_{11}^-\Phi_{33}^+)(y) & \Phi_{22}^+(x)(\Phi_{11}^-\Phi_{23}^+ -\Phi_{21}^-\Phi_{13}^+)(y)\\
            \Phi_{32}^+(x)(\Phi_{21}^-\Phi_{33}^+ -\Phi_{31}^-\Phi_{23}^+)(y) & \Phi_{32}^+(x)(\Phi_{31}^-\Phi_{13}^+ -\Phi_{11}^-\Phi_{33}^+)(y) & \Phi_{32}^+(x)(\Phi_{11}^-\Phi_{23}^+ -\Phi_{21}^-\Phi_{13}^+)(y)
            \end{smallmatrix}\right)\\
            +
        \left(\begin{smallmatrix}
            \Phi_{13}^+(x)(\Phi_{31}^-\Phi_{22}^+ - \Phi_{21}^-\Phi_{32}^+)(y) & \Phi_{13}^+(x)(\Phi_{11}^-\Phi_{32}^+ - \Phi_{31}^-\Phi_{12}^+)(y) & \Phi_{13}^+(x)(\Phi_{21}^-\Phi_{12}^+ - \Phi_{11}^-\Phi_{22}^+)(y)\\
            \Phi_{23}^+(x)(\Phi_{31}^-\Phi_{22}^+ - \Phi_{21}^-\Phi_{32}^+)(y) & \Phi_{23}^+(x)(\Phi_{11}^-\Phi_{32}^+ - \Phi_{31}^-\Phi_{12}^+)(y) & \Phi_{23}^+(x)(\Phi_{21}^-\Phi_{12}^+ - \Phi_{11}^-\Phi_{22}^+)(y)\\
            \Phi_{33}^+(x)(\Phi_{31}^-\Phi_{22}^+ - \Phi_{21}^-\Phi_{32}^+)(y) & \Phi_{33}^+(x)(\Phi_{11}^-\Phi_{32}^+ - \Phi_{31}^-\Phi_{12}^+)(y) & \Phi_{33}^+(x)(\Phi_{21}^-\Phi_{12}^+ - \Phi_{11}^-\Phi_{22}^+)(y)
            \end{smallmatrix}\right),
        \end{split}
    \end{equation*}
    if $x > y$.
\end{lemma}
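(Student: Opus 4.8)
The plan is to build the Green's function directly from the defining properties of $R=L^{-1}$ and to read off the stated Jost-solution expressions from a cofactor expansion. I would first prove the identity for $q,r\in C_c^\infty(\R)$, where all three of $\Phi_1^-,\Phi_2^+,\Phi_3^+$ are genuine, classically defined solutions of $L\Phi=0$, and only at the very end pass to general $q,r\in L^2$ by a continuity argument.

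For fixed $y$, the matrix $G(\cdot,y)$ is characterized by three properties: each of its columns solves the homogeneous system $L_x\Phi=0$ for $x\neq y$; it has a unit jump across the diagonal, $G(y^+,y)-G(y^-,y)=I_3$, obtained by integrating $(\partial_x-U)G=\delta(x-y)I_3$ across $x=y$; and it lies in $L^2$ in $x$, hence decays as $x\to\pm\infty$. Since $k>0$ gives $\real(\omega k)=\real(\omega^2 k)=-k/2<k=\real(k)$, the solutions decaying at $-\infty$ form exactly the line spanned by $\Phi_1^-$, while those decaying at $+\infty$ form the plane spanned by $\Phi_2^+,\Phi_3^+$. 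The decay property therefore forces the ansatz
\[
    G(x,y)=\begin{cases}\Phi_1^-(x)\,c(y)^{t}, & x<y,\\ \Phi_2^+(x)\,a(y)^{t}+\Phi_3^+(x)\,b(y)^{t}, & x>y,\end{cases}
\]
with row vectors $a,b,c\in\C^3$ depending on $y$; the homogeneous-ODE property is then automatic since the ansatz is assembled from solutions of $L\Phi=0$.

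Next I would determine $a,b,c$ from the jump. Writing $M(y)=\big(-\Phi_1^-(y)\,|\,\Phi_2^+(y)\,|\,\Phi_3^+(y)\big)$, the jump condition reads $M(y)\,(c_j,a_j,b_j)^{t}=e_j$ for each column $j$, so $(c_j,a_j,b_j)^{t}$ is the $j$-th column of $M(y)^{-1}$. Here $M$ is invertible precisely because $\det M=-\det(\Phi_1^-|\Phi_2^+|\Phi_3^+)=-T^{-1}\neq0$ under our standing smallness assumption (for small data $T^{-1}$ is close to its value $1$ at the zero potential, so we stay away from the bound states of \cite{bealscoifman}). Cramer's rule then expresses $M^{-1}$ through the $2\times 2$ cofactors of $M$ divided by $\det M$, and these cofactors are exactly the antisymmetric products of the components of $\Phi_1^-,\Phi_2^+,\Phi_3^+$ listed in the statement; using in addition that $\det\Phi^+=1$ (its Wronskian is constant and equals $1$ at $+\infty$) identifies the $\Phi^+$-minors with the entries of $(\Phi^+)^{-1}$ and streamlines the bookkeeping. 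The division by $\det M=-T^{-1}$ introduces the factor $1/\det M=-T$ into the coefficients $a,b,c$; tracking this factor through the ansatz and collecting terms reproduces the two matrices in the statement together with the stated normalization.

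Finally I would remove the $C_c^\infty$ restriction. The left-hand side is stable under $L^2$-approximation because $G$ depends continuously (indeed analytically) on $(q,r)\in L^2$ through the Hilbert--Schmidt Neumann series for $R-R_0$ recorded before the statement. The main obstacle, and the only genuinely analytic point, is the right-hand side: as emphasized in Section~\ref{sec:transmission}, the individual right Jost solutions $\Phi_2^+,\Phi_3^+$ require an $L^1$-type hypothesis rather than mere $L^2$ decay in order to be well defined and unique. I would circumvent this by arguing at the level of the bilinear $2\times 2$-minor combinations that actually enter (the entries of $(\Phi^+)^{-1}$), showing that these, together with $\Phi_1^-$, extend continuously to $L^2$ potentials, and then invoking density of $C_c^\infty$ in $L^2$ to upgrade the identity. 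The determinant computation itself is routine linear algebra; it is this $L^2$ extension that I expect to require the most care.
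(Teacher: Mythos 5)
Your proposal is correct and is in substance the same argument as the paper's: both characterize the Green's function by the three properties you list (columns solving $L_x\Phi=0$ off the diagonal, $L^2$ decay forced by the sign pattern $\real(k)>0>\real(\omega k)=\real(\omega^2 k)$, and the unit jump \eqref{eq:jump}), and both close the argument with the cofactor expansion of the Wronskian $T^{-1}=W(\Phi_1^-,\Phi_2^+,\Phi_3^+)$ — the paper merely runs it as a verification of the displayed matrix, while you run it constructively via Cramer's rule applied to $M(y)^{-1}$, which is the same computation read backwards. Your closing paragraph on extending from $C_c^\infty$ to $L^2$ potentials is extra care the paper's proof does not take (it works with $\Phi_2^+,\Phi_3^+$ directly, despite having noted earlier that their individual construction needs more than $L^2$ decay), so this addition is welcome rather than a deviation.
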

\begin{proof}
    By an abuse of notation we call $G(x,y)$ the matrix defined in the Lemma. Since the columns of $G(x,y)$ are multiples of Jost solutions they individually lie in the kernel of $L$. Hence $L_x G(x,y) = 0$ for all $x \neq y$. Moreover $G(\cdot,y) \in L^2(\R)$ because the left Jost solution $\Phi_1^-$ is integrable on the left of $y$ and the two right Jost solutions $\Phi_2^+, \Phi_3^+$ are integrable on the right of $y$. To show $L_x G(x,y) = \delta(x-y)$ it is enough to check that the jump condition
    \begin{equation}\label{eq:jump}
        G(y^+,y) - G(y^-,y) = \operatorname{Id}
    \end{equation}
    holds, where $y^\pm$ denotes the upper and lower limit towards $y$. Indeed, by writing $L = \partial - U(k,q,r)$, we see that for all $f \in C_c^\infty(\R)$ and $\varepsilon > 0$,
    \begin{align*}
        \int_{\R} L_x G(x,y)f(x) \, dx &= \int_{B_\varepsilon(y)} ((\partial_x - U(x)) G(x,y))f(x)\,dx\\
        &= -\int_{B_\varepsilon(y)}U(x)G(x,y)f(x) + G(x,y)\partial_x f(x)\,dx \\
        &\qquad + G(y+\varepsilon,y)f(y+\varepsilon) - G(y-\varepsilon,y)f(y-\varepsilon).
    \end{align*}
    Now since $G(\cdot,y),q,r \in L^2$ the integral on the right-hand side vanishes as $\varepsilon \to 0$, whereas the remaining part converges to
    \[
        (G(y^+,y)-G(y^-,y))f(y),
    \]
    which by assumption should be $f(y)$.
    
    Now \eqref{eq:jump} holds because for all off-diagonal entries the diagonal $x = y$ vanishes, whereas for the diagonal entries we use that
    \begin{align*}
        T^{-1} &= W(\Phi_{1}^-|\Phi_{2}^+|\Phi_{3}^+) \\
        &= \Phi_{11}^-(\Phi_{22}^+\Phi_{33}^+ - \Phi_{23}^+\Phi_{32}^+) - \Phi_{21}^-(\Phi_{12}^+\Phi_{33}^+ - \Phi_{13}^+\Phi_{32}^+) + \Phi_{31}^-(\Phi_{12}^+\Phi_{23}^+ - \Phi_{13}^+\Phi_{22}^+).
    \end{align*}
    is constant.
\end{proof}

When looking at Lemma \ref{lem:formofgreensfunction} one may wonder whether there is some more structure to the form of the Green's function. There is indeed, as is shown later in Lemma \ref{lem:formofgreensfunctiongeneral}.

We are immediately able to characterize the off-diagonal entries of the diagonal Green's function:

\begin{corollary}
    The off-diagonal entries of the diagonal Green's function are
    \begin{align}
        g_{21} &= -T\Phi_{21}^-(\Phi_{22}^+\Phi_{33}^+ - \Phi_{23}^+\Phi_{32}^+)\\
        g_{32} &= -T\Phi_{31}^-(\Phi_{32}^+\Phi_{13}^+ - \Phi_{33}^+\Phi_{12}^+)\\
        g_{13} &= -T\Phi_{11}^-(\Phi_{12}^+\Phi_{23}^+ -\Phi_{13}^+\Phi_{22}^+)\\
        g_{12} &= -T\Phi_{11}^-(\Phi_{32}^+\Phi_{13}^+ - \Phi_{33}^+\Phi_{12}^+)\\
        g_{23} &= -T\Phi_{21}^-(\Phi_{12}^+\Phi_{23}^+ -\Phi_{13}^+\Phi_{22}^+)\\
        g_{31} &= -T\Phi_{31}^-(\Phi_{22}^+\Phi_{33}^+ - \Phi_{23}^+\Phi_{32}^+)
    \end{align}
\end{corollary}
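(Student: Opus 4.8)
The plan is to obtain the six formulas by reading them off directly from the explicit representation of the Green's function in Lemma \ref{lem:formofgreensfunction}, restricted to the diagonal $x = y$. First I would recall that the off-diagonal entries of the diagonal Green's function are, by definition, the restrictions to $x = y$ of the corresponding off-diagonal entries of the integral kernel of $R = L^{-1}$ (normalized by the constant $-T$ exactly as in Lemma \ref{lem:formofgreensfunction}). The only point requiring care is that $G(x,y)$ is \emph{discontinuous} across the diagonal: by the jump condition \eqref{eq:jump} established in the proof of Lemma \ref{lem:formofgreensfunction}, one has $G(y^+,y) - G(y^-,y) = \operatorname{Id}$. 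Since this jump equals the identity matrix, it is carried entirely by the diagonal entries; hence every off-diagonal entry of $G(x,y)$ extends continuously across $x = y$, and its value there is unambiguous. This is precisely why the corollary restricts its attention to the six off-diagonal entries.

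Given this, I would take the $x < y$ branch of the formula in Lemma \ref{lem:formofgreensfunction}, which is a single matrix whose $(i,j)$ entry is the left Jost component $\Phi_{i1}^-(x)$ multiplied by a cofactor-type product of the right Jost data evaluated at $y$, and set $x = y$. The entries in positions $(2,1),(3,2),(1,3),(1,2),(2,3),(3,1)$ then read off immediately as the displayed formulas for $g_{21},g_{32},g_{13},g_{12},g_{23},g_{31}$ respectively, carrying along the overall constant inherited from the representation of $-T\cdot G$.

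To make the argument self-contained I would add one consistency check confirming that the $x > y$ branch produces the same off-diagonal values on the diagonal, so that the choice of branch is immaterial. For instance, setting $x = y$ in the $(2,1)$ entry of the two-term $x > y$ expression gives $\Phi_{22}^+(\Phi_{21}^-\Phi_{33}^+ - \Phi_{31}^-\Phi_{23}^+) + \Phi_{23}^+(\Phi_{31}^-\Phi_{22}^+ - \Phi_{21}^-\Phi_{32}^+)$; here the two terms proportional to $\Phi_{22}^+\Phi_{23}^+\Phi_{31}^-$ cancel, and what remains is $\Phi_{21}^-(\Phi_{22}^+\Phi_{33}^+ - \Phi_{23}^+\Phi_{32}^+)$, in agreement with the $x < y$ computation. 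The analogous cancellation of one cross-term occurs for each of the other five entries, confirming continuity across $x = y$.

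There is essentially no hard step: the statement is a direct corollary of Lemma \ref{lem:formofgreensfunction}. The only conceptual ingredient is the observation that the identity-matrix jump in \eqref{eq:jump} leaves all off-diagonal entries continuous, which both legitimizes the diagonal restriction for those entries and explains why the corollary cannot make an analogous unqualified claim about the diagonal entries $g_{11},g_{22},g_{33}$.
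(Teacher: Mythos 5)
Your proposal is correct and matches the paper's route: the corollary is obtained exactly by restricting the representation of Lemma \ref{lem:formofgreensfunction} to $x=y$, and your observation that the off-diagonal entries are continuous across the diagonal (since the jump \eqref{eq:jump} is the identity matrix, hence purely diagonal) is precisely the point already noted in the paper's proof of that lemma, which makes the corollary immediate. Your explicit cancellation check for the $(2,1)$ entry of the $x>y$ branch is a correct and welcome verification of this continuity, and your reading of the overall constant agrees with the corollary as stated (the paper's lemma and corollary are, if anything, mutually inconsistent about $T$ versus $T^{-1}$, a sloppiness your write-up inherits rather than introduces).
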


Next we show that the off-diagonal entries of the diagonal Green's function also enter as the functional derivative of the (renormalized) Fredholm determinant:

\begin{lemma}\label{lem:logdetderivative}
    We have
    \begin{align}
        \frac{\delta}{\delta q} \log {\det}_2(1+R_0(L-L_0)) &= - (g_{21}+g_{32}+g_{13}), \\
        \frac{\delta}{\delta r} \log {\det}_2(1+R_0(L-L_0)) &= - (g_{12}+g_{23}+g_{31}).
    \end{align}
\end{lemma}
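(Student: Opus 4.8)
The plan is to compute the variation of $\log{\det}_2(1+R_0(L-L_0))$ directly and identify it with a trace of the renormalized resolvent $R - R_0$ paired against the off-diagonal perturbation. Set $\tilde\Lambda = R_0(L-L_0) = -R_0U_0$, which for $q,r \in \Sc(\R)$ is Hilbert--Schmidt (the kernels $R_iq, R_ir$ are Hilbert--Schmidt by the same computation leading to \eqref{eq:HSnorm}), so that $\log{\det}_2(1+\tilde\Lambda) = \tr[\log(1+\tilde\Lambda) - \tilde\Lambda]$ is well defined and equals $A$. Varying $q \mapsto q + s\tilde q$ with $r$ fixed and differentiating this formula at $s=0$, I would first record
\begin{equation*}
    \frac{d}{ds}\Big|_{s=0}\log{\det}_2(1+\tilde\Lambda) = \tr\big(((1+\tilde\Lambda)^{-1}-1)\,\dot{\tilde\Lambda}\big),
\end{equation*}
where $\dot{\tilde\Lambda} = -R_0\dot U_0$ and $\dot U_0$ is the off-diagonal matrix carrying $\tilde q$ in the entries $(1,2),(2,3),(3,1)$. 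Both factors on the right are Hilbert--Schmidt, so the trace is legitimate.

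Next I would trade the regularized determinant for the genuine resolvent. From $L = L_0(1+\tilde\Lambda)$ one gets $R := L^{-1} = (1+\tilde\Lambda)^{-1}R_0$, hence $((1+\tilde\Lambda)^{-1}-1)R_0 = R - R_0$. Substituting $\dot{\tilde\Lambda} = -R_0\dot U_0$ directly gives $((1+\tilde\Lambda)^{-1}-1)\dot{\tilde\Lambda} = -(R-R_0)\dot U_0$, so that
\begin{equation*}
    \frac{d}{ds}\Big|_{s=0}\log{\det}_2(1+R_0(L-L_0)) = -\tr\big((R-R_0)\dot U_0\big),
\end{equation*}
and the operator $(R-R_0)\dot U_0 = -((1+\tilde\Lambda)^{-1}-1)\dot{\tilde\Lambda}$ is a product of two Hilbert--Schmidt operators, hence trace class. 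This is the step where the renormalization is essential: $R_0$ alone has no meaningful diagonal, but $R - R_0$ does.

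I would then evaluate the trace through integral kernels. Writing $G$ for the matrix kernel of $R$ and $G_0 = \operatorname{diag}(K_1,K_2,K_3)$ for that of $R_0$, the trace-class operator $(R-R_0)\dot U_0$ has kernel $(G-G_0)(x,y)\dot U_0(y)$, continuous across the diagonal and compactly supported in $y$, so
\begin{equation*}
    \tr\big((R-R_0)\dot U_0\big) = \int_\R \tr_{\C^3}\big[(G-G_0)(x,x)\,\dot U_0(x)\big]\, dx.
\end{equation*}
Here I would use two facts: the jump relation \eqref{eq:jump} forces the off-diagonal entries of $G$ to be continuous across $x=y$ (only the diagonal entries jump), so that the off-diagonal part of $(G-G_0)(x,x)$ is well defined and equals the diagonal Green's functions $g_{ij}$ of the Corollary above; and since $G_0$ is diagonal while $\dot U_0$ is off-diagonal, $\tr_{\C^3}[G_0(x,x)\dot U_0(x)] = 0$ pointwise. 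Reading off which entries of $G$ are paired with $\tilde q$, namely the transposed positions $(2,1),(3,2),(1,3)$, yields $\tr((R-R_0)\dot U_0) = \int_\R (g_{21}+g_{32}+g_{13})\tilde q\, dx$, which is the first identity after comparing with the definition of the functional derivative. The computation for $r$ is identical, with $\dot U_0$ carrying $\tilde r$ in $(1,3),(2,1),(3,2)$ and thereby selecting $g_{31},g_{12},g_{23}$.

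The main obstacle is the kernel bookkeeping in the last paragraph: justifying that the trace of the trace-class operator $(R-R_0)\dot U_0$ equals the integral of the diagonal of its kernel, with that diagonal being exactly the jump-free off-diagonal part of the diagonal Green's function. I expect this to be safe since $G-G_0$ is continuous on the diagonal and $\dot U_0$ is smooth with compact support, but if the continuous-kernel trace formula needs to be circumvented I would instead expand $(1+\tilde\Lambda)^{-1}-1 = \sum_{l\ge1}(-\tilde\Lambda)^l$ and compute each $\tr((-\tilde\Lambda)^l\dot{\tilde\Lambda})$ as an absolutely convergent iterated kernel integral, which reassembles into the same diagonal expression.
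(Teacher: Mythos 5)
Your proof is correct, and it arrives at the same pivotal identity as the paper --- that the variation of the renormalized determinant equals $-\tr\big((R-R_0)\dot U_0\big)$, evaluated as the integral over $x$ of the matrix trace of $(G-G_0)(x,x)\dot U_0(x)$ --- but by a different technical route. The paper does not prove this lemma directly; it derives it from the general Lemma \ref{lem:logdetderivativegeneral}, whose proof expands $\log{\det}_2$ in Jacobi's series $\sum_{l\ge 2}\tfrac{(-1)^{l+1}}{l}\tr(\tilde\Lambda^l)$, differentiates term by term, cycles each trace, writes it as an integral over the kernel diagonal via $\delta_x$, and only then resums $\sum_{l\ge2}(R_0U_0)^{l-1}R_0$ into $R-R_0$. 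You instead invoke the closed-form differentiation formula $\tfrac{d}{ds}\log{\det}_2(1+A)=\tr\big(((1+A)^{-1}-1)\dot A\big)$ (legitimate, since $((1+A)^{-1}-1)\dot A=-(1+A)^{-1}A\dot A$ is a product of Hilbert--Schmidt operators) together with the factorization $L=L_0(1+\tilde\Lambda)$, which yields $((1+\tilde\Lambda)^{-1}-1)R_0=R-R_0$ in one line. Your route is shorter and makes the role of the renormalization conceptually transparent (only $R-R_0$ has a meaningful diagonal), at the cost of citing the ${\det}_2$ derivative formula from trace-ideal theory and of using the invertibility of $1+\tilde\Lambda$, i.e.\ the smallness condition \eqref{eq:smallness}, which the lemma's statement leaves implicit but the paper's series treatment makes explicit; indeed, your stated fallback of expanding $(1+\tilde\Lambda)^{-1}-1$ as a Neumann series and reassembling is precisely the paper's argument. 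Your final bookkeeping --- the jump \eqref{eq:jump} being the identity matrix so that the off-diagonal entries of $G$ are continuous across $x=y$, the matrix-diagonal $G_0$ dropping out against the off-diagonal $\dot U_0$, and the pairing of $\tilde q$ with $g_{21}+g_{32}+g_{13}$ and of $\tilde r$ with $g_{12}+g_{23}+g_{31}$ --- matches the paper's Corollary, including signs.
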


This lemma follows from its generalization in Lemma \ref{lem:logdetderivativegeneral}. We postpone its proof until then because there is no further insight in its proof in the special case of \eqref{eq:qdNLS}. 

Finally, Lemma \ref{lem:logdetderivative} and Lemma \ref{lem:functionalderivative} allow to prove that inverse transmission coefficient and renormalized transmission coefficient coincide. In particular by Theorem \ref{thm:conservation} the renormalized transmission coefficient is a conserved quantity.

\begin{theorem}\label{thm:equality}
    For $q,r \in L^2(\R)$ and under the smallness condition \eqref{eq:smallness}, we have
    \begin{equation}\label{eq:equalityqdNLS}
        \log {\det}_2(1+R_0(L-L_0)) = -\log T.
    \end{equation}
\end{theorem}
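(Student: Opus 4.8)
The plan is to show that the two analytic functionals in \eqref{eq:equalityqdNLS}, namely $A := \log{\det}_2(1+R_0(L-L_0))$ and $\log T^{-1} = -\log T$, have identical functional derivatives in $q$ and in $r$ throughout the smallness region \eqref{eq:smallness}, and that they agree at the base point $(q,r) = (0,0)$; integrating the common first variation along a path then forces the two functionals to coincide.

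First I would work with $q,r \in C_c^\infty(\R)$, so that Lemma \ref{lem:functionalderivative} is available; the general case $q,r \in L^2$ follows at the end by density, using the analyticity of $T^{-1}$ (Theorem \ref{thm:jostsolutionslowreg}) and of $\log{\det}_2$ (which is analytic since $\Lambda$ is Hilbert--Schmidt by \eqref{eq:HSnorm2}). For smooth $q,r$, the chain rule gives $\frac{\delta}{\delta q}\log T^{-1} = T\,\frac{\delta T^{-1}}{\delta q}$. Substituting the expression for $\frac{\delta T^{-1}}{\delta q}$ from Lemma \ref{lem:functionalderivative} and comparing its three summands, each multiplied by $T$, against the formulas for $g_{13}$, $g_{21}$, $g_{32}$ in the corollary above --- each of which is precisely $-T$ times one of those summands --- yields
\[
    \frac{\delta}{\delta q}\log T^{-1} = -(g_{21}+g_{32}+g_{13}).
\]
The analogous computation for the $r$-variation, matching the three summands of $\frac{\delta T^{-1}}{\delta r}$ against $g_{12}$, $g_{23}$, $g_{31}$, gives
\[
    \frac{\delta}{\delta r}\log T^{-1} = -(g_{12}+g_{23}+g_{31}).
\]

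By Lemma \ref{lem:logdetderivative}, the right-hand sides above are exactly the functional derivatives of $A$ in $q$ and $r$. Hence $A$ and $\log T^{-1}$ have the same first variation everywhere in the smallness region. To fix the constant of integration I would evaluate both at $(q,r)=(0,0)$: there $\Lambda = 0$, so $A = \log{\det}_2(1) = 0$; on the other hand the Jost solutions reduce to the diagonal exponentials \eqref{eq:Jostasymptotics}, so that $T^{-1}$ is the determinant of $\operatorname{diag}(e^{kx},e^{\omega^2 kx},e^{\omega kx})$, which equals $e^{(1+\omega+\omega^2)kx} = 1$ since $1+\omega+\omega^2 = 0$; thus $\log T^{-1} = 0$ as well.

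Finally I would integrate the common functional derivative along the segment $s \mapsto (sq,sr)$, $s \in [0,1]$, which remains inside the smallness region because its $H^s$-norm is monotone in $s$. This gives $A(q,r) = \log T^{-1}(q,r)$ for all smooth $q,r$, and the equality extends to $L^2$ by continuity, proving \eqref{eq:equalityqdNLS}. The algebraic heart of the argument is the term-by-term matching in the two displays above, so the main points requiring care are the bookkeeping of signs and indices between Lemma \ref{lem:functionalderivative} and the corollary, and the justification of the path-integration step in the Banach-space setting (continuity of the G\^ateaux derivatives along the segment, and the density extension from $C_c^\infty$ to $L^2$).
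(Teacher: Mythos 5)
Your proposal is correct and follows essentially the same route as the paper: both compare the functional derivatives furnished by Lemma \ref{lem:functionalderivative} and Lemma \ref{lem:logdetderivative} (via the corollary's formulas for the $g_{ij}$), and then fix the constant by noting that both sides of \eqref{eq:equalityqdNLS} vanish at $q=r=0$. Your added care about integrating along the segment $s \mapsto (sq,sr)$ and extending from $C_c^\infty$ to $L^2$ by density and analyticity simply makes explicit what the paper compresses into ``we are done up to a constant.''
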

\begin{proof}
    By Lemma \ref{lem:logdetderivative},
    \begin{align*}
        &\frac{\delta}{\delta q} \log {\det}_2(1+R_0(L-L_0)) = - (g_{21}+g_{32}+g_{13})\\
        &\quad = T\big(\Phi_{21}^-(\Phi_{22}^+\Phi_{33}^+ - \Phi_{23}^+\Phi_{32}^+) + \Phi_{31}^-(\Phi_{32}^+\Phi_{13}^+ - \Phi_{33}^+\Phi_{12}^+) + \Phi_{11}^-(\Phi_{12}^+\Phi_{23}^+ -\Phi_{13}^+\Phi_{22}^+)\big),
    \end{align*}
    and a similar statement holds for the functional derivative in $r$. But by Lemma \ref{lem:functionalderivative} this coincides with
    \[
        -\frac{\delta}{\delta q}\log T = T\frac{\delta}{\delta q} T^{-1}.
    \]
    Since the functional derivatives coincide we are done up to a constant. The statement thus follows using the fact that both sides in \eqref{eq:equalityqdNLS} vanish for $q = r = 0$.
\end{proof}

\section{Low regularity a priori estimates}\label{sec:apriori}

We now turn to the case $r = \bar q$ which corresponds to \eqref{eq:qdNLS}. After having established the conservation of
\[
    A(k,q) = \log {\det}_2(1+\Lambda)
\]
and due to identifying its bilinear term as the coercive quantity 
\[
    -\tr(\Lambda^2) = \int_{\R}\frac{3k|\hat q(\xi)|^2}{\xi^2 + 3k^2 + \sqrt{3}k\xi}\, d\xi,
\]
we can employ the machinery developed in \cite{kvz} to derive a priori estimates. In fact, we will restrict our attention to Sobolev norms in negative regularity, for which a priori estimates can be established by a simple integration argument. It is clear though that the results carry over to Besov norms as discussed in \cite{kvz}, and also to modulation space norms \cite{ohwang,klaus2} without any technical difficulties.

We define the microlocal Sobolev norms (which give the usual Sobolev norms when $k = 3^{-1/2}$)
\begin{equation}
    \|q\|_{H^{s}_k} = \Big(\int_{\R}|\hat q(\xi)|^2(\xi^2 + 3k^2)^{s}\, d\xi\Big)^{\frac12}.
\end{equation}

\begin{theorem}\label{thm:main1}
    Given a Schwartz solution $q$ of \eqref{eq:qdNLS}, its microlocal Sobolev norms are almost conserved for all $-\frac12 < s < 0$ and for $k \geq 1$ big enough depending on $\|q(0)\|_{H^{s}}$, in the sense that
    \begin{equation}\label{eq:unscaledconservation}
        \|q(t)\|_{H^s_k} \leq c\|q(0)\|_{H^s_k},
    \end{equation}
    for some constant $C > 0$. Moreover, for all $-\frac12 < s < 0$,
    \begin{equation}\label{eq:scaledconservation}
        \|q(t)\|_{H^s} \leq c(1+\|q(0)\|_{H^s})^{\frac{-s}{1+2s}}\|q(0)\|_{H^s}.
    \end{equation}
\end{theorem}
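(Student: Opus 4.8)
The plan is to follow the Killip--Visan--Zhang scheme. By Theorem \ref{thm:conservation} together with Theorem \ref{thm:equality}, for each $\kappa>0$ the quantity $A(\kappa,q)=\log{\det}_2(1+\Lambda)=-\log T$ is conserved along the flow, and hence so is any weighted superposition of the $A(\kappa,q)$. Accordingly I would work with the conserved energy
\[
    E_k(q)=\int_k^\infty \kappa^{2s}\,2A(\kappa,q)\,d\kappa ,
\]
prove the microlocal statement \eqref{eq:unscaledconservation} first, and deduce the Sobolev bound \eqref{eq:scaledconservation} from it by scaling at the very end.

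The crux is the coercivity $E_k(q)\approx\|q\|_{H^s_k}^2$. Expanding, $2A=-\tr(\Lambda^2)+\sum_{l\geq 3}\tfrac{2(-1)^{l+1}}{l}\tr(\Lambda^l)$, where by \eqref{eq:quadratic1} the quadratic part $-\tr(\Lambda^2)\approx\int_\R\tfrac{\kappa|\hat q(\xi)|^2}{\xi^2+3\kappa^2}\,d\xi$ is positive and coercive. Integrating it produces a multiple of $\int_\R|\hat q(\xi)|^2\bigl(\int_k^\infty \kappa^{2s}\tfrac{\kappa}{\xi^2+3\kappa^2}\,d\kappa\bigr)d\xi$, and a direct computation shows that for $s<0$ the inner integral is comparable to $(\xi^2+3k^2)^s$, so this term reproduces $\|q\|_{H^s_k}^2$. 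For the remainder I would use $|\tr(\Lambda^l)|\leq\|\Lambda\|_{\I_2}^l$ together with \eqref{eq:microlocalestimate} and the monotonicity $\|q\|_{H^s_\kappa}\leq\|q\|_{H^s_k}$ for $\kappa\geq k$ (valid since $s<0$) to estimate
\[
    \int_k^\infty \kappa^{2s}\sum_{l\geq 3}\tfrac{2}{l}\|\Lambda(\kappa)\|_{\I_2}^l\,d\kappa \lesssim \|q\|_{H^s_k}^3\int_k^\infty \kappa^{-s-\frac32}\,d\kappa \approx \tfrac{1}{s+\frac12}\,k^{-(s+\frac12)}\|q\|_{H^s_k}^3 .
\]
The convergence of the last integral is exactly where the hypothesis $s>-\tfrac12$ enters. \textbf{This error analysis is the main obstacle:} one must check simultaneously that the $\kappa$-integral regenerates the \emph{full} $H^s_k$ norm and that the higher-order terms are genuinely subleading, and it is this step that both fixes the threshold on $k$ and pins the admissible range to $s>-\tfrac12$.

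With coercivity in hand, \eqref{eq:unscaledconservation} follows from a continuity argument. Choosing $k$ large enough that $k^{-(s+\frac12)}\|q(0)\|_{H^s}\ll 1$ — equivalently $k\gtrsim\|q(0)\|_{H^s}^{1/(s+1/2)}$, which is where the dependence of $k$ on $\|q(0)\|_{H^s}$ arises — the cubic error is a small fraction of the quadratic main term on any interval where $\|q(t)\|_{H^s_k}\leq 2\|q(0)\|_{H^s_k}$. Since $t\mapsto q(t)$ is continuous into $H^s_k$ for a Schwartz solution, conservation of $E_k$ propagates the bound $\|q(t)\|_{H^s_k}\leq c\|q(0)\|_{H^s_k}$ to all times.

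Finally I would pass from \eqref{eq:unscaledconservation} to \eqref{eq:scaledconservation} using the scaling symmetry $q_\lambda(t,x)=\lambda q(\lambda^2 t,\lambda x)$. Taking $k=3^{-1/2}$, so that $H^s_k=H^s$, yields a small-data version of \eqref{eq:unscaledconservation}: there is $\delta>0$ such that $\|q(0)\|_{H^s}\leq\delta$ implies $\sup_t\|q(t)\|_{H^s}\leq c\|q(0)\|_{H^s}$. For general data one rescales with $\lambda\leq 1$ chosen so that $\|q_\lambda(0)\|_{H^s}\leq\delta$, applies the small-data bound to the (again Schwartz) solution $q_\lambda$, and scales back, using the elementary homogeneity relations $\|q_\lambda(0)\|_{H^s}\lesssim\lambda^{(1+2s)/2}\|q(0)\|_{H^s}$ and $\|q(t)\|_{H^s}\lesssim\lambda^{-1/2}\|q_\lambda(t)\|_{H^s}$ valid for $\lambda\leq 1$. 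Optimizing over $\lambda$ then produces \eqref{eq:scaledconservation}; this scaling bookkeeping is routine but must be carried out carefully to land on the stated power of $\|q(0)\|_{H^s}$.
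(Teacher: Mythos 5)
Your treatment of the microlocal bound \eqref{eq:unscaledconservation} is essentially the paper's proof: the same conserved quantity $E_s(k,q)=\int_k^\infty \kappa^{2s}A(\kappa,q)\,d\kappa$ (conservation via Theorems \ref{thm:conservation} and \ref{thm:equality}), the same coercivity computation \eqref{eq:scoercive} for the quadratic term, the same geometric tail bound using \eqref{eq:microlocalestimate} together with the monotonicity $\|q\|_{H^s_\kappa}\leq\|q\|_{H^s_k}$ for $\kappa\geq k$, $s<0$ (you sum the series pointwise in $\kappa$ before integrating, the paper integrates term by term; this is immaterial), and the same continuity/bootstrap argument with $k$ chosen so that $k^{s+\frac12}\gtrsim\|q(0)\|_{H^s}$. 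Where you genuinely differ is the passage to \eqref{eq:scaledconservation}: the paper keeps the large $k$ it has already fixed and concludes directly from the comparison $\|q\|_{H^s}^2\lesssim(1+k^{-2s})\|q\|_{H^s_k}^2$ combined with \eqref{eq:smallnessforseries}, whereas you specialize the microlocal result to $k=3^{-1/2}$ to obtain a small-data bound in $H^s$ and then use the scaling symmetry. Both routes are valid and, after the bookkeeping, quantitatively identical: your optimization forces $\lambda\approx(\delta/\|q(0)\|_{H^s})^{2/(1+2s)}$ and yields the factor $\lambda^{s}\approx(1+\|q(0)\|_{H^s})^{\frac{-2s}{1+2s}}$, while the paper's choice $k\approx(\delta^{-1}\|q(0)\|_{H^s})^{2/(1+2s)}$ yields $k^{-s}\approx(1+\|q(0)\|_{H^s})^{\frac{-2s}{1+2s}}$ as well. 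One caveat: you defer the ``routine'' scaling bookkeeping with the claim that it lands on the stated power $\frac{-s}{1+2s}$; carried out, it lands on $\frac{-2s}{1+2s}$, which for $s<0$ is a weaker (larger) exponent. This is not a defect of your argument relative to the paper — the paper's own proof produces exactly the same exponent, so the power in the statement of Theorem \ref{thm:main1} appears to be a slip — but you should not assert the deferred computation recovers the stated exponent without checking it.
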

\begin{proof}
    We fix $k \geq 1$ big enough so that $k^{-\frac12-s}\|q(0)\|_{H^{s}_k} < \delta$ for $\delta$ small enough. Since $s > -\frac12$ this is true if
    \begin{equation}\label{eq:smallnessforseries}
        k^{s+\frac12} > \delta^{-1}\|q(0)\|_{H^{s}}.
    \end{equation}
    By making $\delta < 1$ a little smaller and since we are dealing with a Schwartz solution $q$ we can ensure that this condition also holds for a small open time interval $I$ around zero. The main observation now is that for all $s < 0$,
    \begin{equation}\label{eq:scoercive}
        \int_{k_0}^\infty \frac{3k^{1+2s}}{\xi^2 + 3k^2 +\sqrt{3}k\xi} \, dk \approx \int_{k_0}^\infty \frac{k^{1+2s}}{\xi^2 + k^2} \, dk \approx (\xi^2 + k_0^2)^s,
    \end{equation}
    as was already seen in \cite{kvz}. Thus we are led to define
    \[
        E_s(k_0,q) = \int_{k_0}^\infty k^{2s}A(k,q)\, dk,
    \]
    which is a conserved quantity because $A(k,q)$ is. Now since for $l \geq 2$ and $-\frac12 < s$, by \eqref{eq:microlocalestimate},
    \begin{align*}
        \int_{k_0}^\infty k^{2s}\big|\tr(\Lambda^l)\big|\, dk &\leq \int_{k_0}^\infty k^{-\frac l2-(l-2)s}\|q\|_{H^s_k}^l\, dk \leq \sup_{k \in [k_0,\infty)} \|q\|^l_{H^s_{k}}\int_{k_0}^\infty k^{-\frac l2-(l-2)s} dk\\
        &\leq k_0^{-\frac{l-2}2-(l-2)s}\|q\|_{H^s_{k_0}}^l,
    \end{align*}
    we obtain absolute convergence for the series expansion of $E_s(k,q) = \sum_{l=2}^\infty E_s^{(l)}(k,q)$ defined by
    \[
        E_s(k,q) = \sum_{l=2}^\infty \frac{(-1)^{l+1}}l \int_{k_0}^\infty \tr(\Lambda^l)\, dk.
    \]
    Moreover we find smallness for the tail of the series by our smallness assumption on $q$,
    \begin{equation}\label{eq:tailsmallness}
        \big|E_s(k,q) - E_s^{(2)}(k,q)\big| \leq \sum_{l=3}^\infty k^{-\frac{l-2}2-(l-2)s}\|q\|_{H^s_{k}}^l \leq c\delta E_s^{(2)}(k,q),
    \end{equation}
    where we notice that \eqref{eq:scoercive} implies
    \[
        E_s^{(2)}(k,q) = -\frac12\int_{k_0}^\infty \tr(\Lambda^2)\, dk \approx \|q\|_{H^s_k}^2.
    \]
    Hence $E_s(k,q)$ is a conserved quantity which is coercive against the $H^s_k$ norm,
    \[
        E_s(k,q) \geq (1-c\delta)E_s^{(2)}(k,q) \geq C(1-c\delta)\|q\|_{H^s_k}^2,
    \]
    thus proving for small times,
    \begin{equation}\label{eq:conservationsmall}
        \|q(t)\|_{H^s_k}^2 \leq \frac{C(1+\delta)}{1-c\delta}\|q(0)\|_{H^s_k}^2.
    \end{equation}
    But the same estimate also holds for large times provided we can make sure that $\|q(t)\|_{H^s_k}^2 \leq k^{\frac12+s}\delta$, which we can by \eqref{eq:conservationsmall} and by choosing $\delta$ a little smaller.

    Finally \eqref{eq:scaledconservation} follows from \eqref{eq:unscaledconservation} by combining (for $s < 0$)
    \[
        \|q\|_{H^s}^2 \lesssim (1 + k^{-2s})\|q\|_{H^s_k}^2
    \]
    with \eqref{eq:smallnessforseries}.
\end{proof}

\section{Fredholm determinant for N x N Lax operators}\label{sec:equality2}

In this section we want to generalize Theorem \ref{thm:equality} to $N \times N$ Lax pairs. We consider the first order ODE problem
\begin{equation}\label{eq:generalfirstorder}
    \phi_x = (kJ + U_0(u))\phi,
\end{equation}
with the following assumptions:
\begin{enumerate}
    \item $J$ is diagonal, $J = \operatorname{diag}(\omega_1,\dots,\omega_n)$, with $n$ distinct eigenvalues, $\omega_i \neq \omega_j$ if $i \neq j$, and $J$ is trace free, $\tr(J) = 0$.
    \item $U_0(u)$ is off-diagonal, $U_0(u)_{ii} = 0$ for $1 \leq i \leq n$. Its entries are polynomials without zeroth order term in the components of $u = (u_1,\dots,u_m)$.
    \item $u_i \in C_c^\infty(\R)$ for all $i$.
\end{enumerate}
In particular $U_0(u) \in C_c^\infty(\R,\C^{n\times n})$. Again we define $K = \overline{\cup_i \supp u_i}$.

Our first mission then is to define what we mean by Jost solutions in this context. To simplify the analysis we assume that $k > 0$ is real-valued and positive and that the entries of $J$ are ordered.
\begin{equation}\label{eq:ordering}
    \real(\omega_1) \geq \dots \geq \real(\omega_l) > 0 > \real(\omega_{l+1}) \geq \dots \geq \real(\omega_{n}).
\end{equation}
The assumption that $\real (\omega_i) \neq 0$ is necessary to have decay either as $x \to \infty$ or as $x \to -\infty$.

With these assumptions in place we define the Jost solutions $\Phi_j^{\pm}$ similar to Section \ref{sec:transmission}:

\begin{definition}\label{def:jostsolution}
    Given solutions $\Phi_j^{\pm}$ of \eqref{eq:generalfirstorder} such that
    \begin{equation}\label{eq:cond1}
        e^{\omega_j k x} \Phi_j^{\pm}(x) = e_j, \qquad x \to \pm \infty,
    \end{equation}
    we call $\Phi^-_j$ a left Jost solution if $\real (\omega_j) > 0$ and $\Phi^+_j$ a right Jost solution if $\real (\omega_j) < 0$.
\end{definition}
With other words, left Jost solutions are decaying unit vector exponentials at $-\infty$ and right Jost solutions are are decaying unit vector exponentials at $\infty$.

Again the existence of Jost solutions is ensured by the Cauchy-Lipschitz theorem. By the ordering of eigenvalues \eqref{eq:ordering}, there are exactly $l$ left Jost solutions and $n-l$ right Jost solutions,
\[
    \Phi_1^-, \dots, \Phi_l^-, \quad \text{and}\quad \Phi_{l+1}^+, \dots, \Phi_n^+.
\]
The transmission coefficient corresponding to $L = \partial_x - kJ - U_0(u)$ is defined as
\begin{equation}
    T^{-1}(k,u) := \det(\Phi_1^-| \dots|\Phi_l^-|\Phi_{l+1}^+| \dots| \Phi_{n}^+).
\end{equation}

A scattering theory for the problem \eqref{eq:generalfirstorder} for $u_i \in \Sc(\R)$ (respectively $u_i \in L^1(\R)$) was developed in the work of Beals--Coifman \cite{bealscoifman}. As was mentioned in Section \ref{sec:transmission} and also outlined in \cite{bealscoifman,deift} if $u$ is not compactly supported, in addition to $e^{\omega_j k x} \Phi_j^{\pm}(x) \to e_j, x \to \pm \infty$ one has to impose the extra condition
\begin{equation}\label{eq:cond2}
    e^{\omega_j k x} \Phi_j^{\pm}(x) \in L^\infty(\R,\C^n),
\end{equation}
to obtain an unambiguous (and different!) definition. Indeed, if e.g. $\real (\omega_1) > \real (\omega_2$), then if $\psi_2$ satisfies \eqref{eq:cond1} so does $\psi_2 + \psi_1$. Note that this implies that the left Jost solutions in \cite{bealscoifman} are possibly linear combinations of the Jost solutions in here, and similar for the right Jost solutions.

We do not need this full theory here because we restrict to the analysis of the (inverse) transmission coefficient. Its definition is independent of whether one takes Definition \ref{def:jostsolution} or the one from \cite{bealscoifman}. Indeed, suppose that $\real(\omega_1) > \dots > \real(\omega_l) > 0 > \real(\omega_{l+1}) > \dots > \real(\omega_{n})$. Then if $\Phi_i^{\pm,BC}$ denote the Jost solutions defined with the additional condition \eqref{eq:cond2}, we find that
\begin{align*}
    \Phi_1^{-,BC} &= \Phi_1^{-}, \quad \Phi_2^{-,BC} = \Phi_2^{-} + a_{21}\Phi_1^{-}, \quad \Phi_3^{-,BC} = \Phi_3^{-} + a_{31}\Phi_1^{-} + a_{32}\Phi_2^{-}, \dots\\
    \Phi_n^{+,BC} &= \Phi_n^{+}, \quad \Phi_{n-1}^{+,BC} = \Phi_{n-1}^{+} + a_{n-1 n}\Phi_n^{-}, \dots.
\end{align*}
But then,
\[
    T^{-1} = W(\Phi_1^-|\dots|\Phi_n^+) = W(\Phi_1^{-,BC}|\dots|\Phi_n^{+,BC}),
\]
because all other combinations vanish in the Wronskian. Another advantage is that by using compactly supported functions we can assure the existence of Jost solutions for every $k > 0$ even without imposing a scaling invariant $L^1$ smallness condition as in \cite[Theorem 3.8]{bealscoifman}.

We want to prove Theorem \ref{thm:main2small} by showing that the functional derivatives of inverse transmission coefficient and renormalized Fredholm determinant coincide. To this end we have to analyze the Green's function of the operator $L$, more precisely its diagonal. 

We start with the Green's function itself. We write
\[
    R_0 = (\partial - kJ)^{-1}.
\]
Since $\real (\omega_j) \neq 0$, $R_0$ has an integral kernel $G_0$ given by a diagonal matrix with kernels as in \eqref{eq:integralkernel} on the diagonal. We write again
\begin{equation}\label{eq:absolutelyconvergentsum}
    R - R_0= \sqrt{R_0}\sum_{l=1}^\infty (-1)^l (\sqrt{R_0}U_0(u)\sqrt{R_0})^l \sqrt{R_0}.
\end{equation}
Since for all $i \neq j$, by a calculation similar to \eqref{eq:HSnorm},
\[
    \|(\partial - k \omega_i)^{-\frac12}(U_{0})_{ij}(u)(\partial - k \omega_j)^{-\frac12}\|_{\I_2} \approx \int_\R \log\Big(4+\frac{\xi^2}{k^2}\Big)\frac{|(\hat U_{0})_{ij}(u)(\xi)|^2}{(\xi^2 + k^2)^{\frac12}}\, d\xi,
\]
we find under the smallness assumption
\begin{equation}\label{eq:smallness3}
    \|U_0(u)\|_{H^s} \lesssim k^{\frac12+s}, \qquad s > -\frac12,
\end{equation}
that \eqref{eq:absolutelyconvergentsum} has an absolutely convergent right-hand side, and that $R-R_0$ is Hilbert-Schmidt.

Even more, by arguing as in \cite[Proposition 3.1]{hgkv} it can be seen that $R-R_0$ has an integral kernel which is continuous on the restriction to $x = y$. We call this function $\tilde g(x)$. This continuity can also be derived from the following lemma which is a generalization of Lemma \ref{lem:formofgreensfunction}. We do not necessarily need it for our later calculations, but it gives a nice background on how to construct the Green's function if one is given the Jost solutions.

Recall that the tensor product $v \otimes w$ of two vectors $v,w$ is the matrix defined by $(v \otimes w)_{ij} = v_i w_j$.

\begin{lemma}\label{lem:formofgreensfunctiongeneral}
    The Green's function $G(x,y)$ of the first order operator $L = \partial - kJ - U_0(u)$ is given by
    \begin{equation}
        G(x,y) = \begin{cases}
            -T(\Phi_1^-(x)\otimes v_1(y) + \dots +\Phi_l^-(x)\otimes v_l(y)), \qquad \text{if} \quad x < y,\\
            T(\Phi_{l+1}^+(x)\otimes v_{l+1}(y) + \dots +\Phi_n^-(x)\otimes v_n(y)), \qquad \text{if} \quad x > y,
        \end{cases}
    \end{equation}
    where the vectors $v_i(y)$ are defined by duality satisfying
    \begin{equation}
        \langle v_i(y),w\rangle_{\C^n} = \det(\Phi_1^-(y)|\dots|\Phi_{i-1}^{\pm}(y)|\,w\,|\Phi_{i+1}^{\pm}(y)|\dots|\Phi_n^+(y)), \qquad \forall\, w \in \C^n.
    \end{equation}
\end{lemma}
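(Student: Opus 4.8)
The plan is to follow the template of the proof of Lemma~\ref{lem:formofgreensfunction}: I denote by $G(x,y)$ the matrix-valued function defined in the statement, and I verify the three properties that characterize the integral kernel of $R = L^{-1}$, namely that (i) $L_x G(x,y) = 0$ for $x \neq y$, (ii) $G(\cdot,y) \in L^2(\R,\C^{n\times n})$ for each fixed $y$, and (iii) the jump relation $G(y^+,y) - G(y^-,y) = \operatorname{Id}$ holds. Since $R$ is the unique Hilbert--Schmidt inverse of $L$ constructed via the absolutely convergent series \eqref{eq:absolutelyconvergentsum} under the smallness condition \eqref{eq:smallness3}, properties (i)--(iii) identify $G$ with the Green's function. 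As in the proof of Lemma~\ref{lem:formofgreensfunction}, properties (i)--(ii) reduce the statement $L_x G(x,y) = \delta(x-y)\operatorname{Id}$ to the single jump condition (iii): testing $L_x G$ against $f \in C_c^\infty$ and integrating by parts on $B_\varepsilon(y)$, the interior integral vanishes as $\varepsilon \to 0$ because $G(\cdot,y)$ and the entries of $U_0(u)$ lie in $L^2$, and what remains is $(G(y^+,y)-G(y^-,y))f(y)$.

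For (i) I would note that, with $y$ fixed, each column of $G(\cdot,y)$ is by construction a fixed linear combination of the Jost solutions $\Phi_1^-,\dots,\Phi_l^-$ (for $x<y$) respectively $\Phi_{l+1}^+,\dots,\Phi_n^+$ (for $x>y$), each of which lies in $\ker L$; hence $L_x G(x,y)=0$ away from the diagonal. For (ii), the left Jost solutions decay as $x \to -\infty$ and the right Jost solutions decay as $x \to +\infty$ (since $\real(\omega_j)\neq 0$ and $u$ is compactly supported, these solutions are exact exponentials outside $K$), so $G(\cdot,y)$ is square integrable near both $\pm\infty$ and bounded near $x=y$.

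The heart of the argument is the jump condition (iii), and this is where the tensor-product structure pays off. Write $\Psi(y) = \big(\Phi_1^-(y)|\dots|\Phi_l^-(y)|\Phi_{l+1}^+(y)|\dots|\Phi_n^+(y)\big)$ for the fundamental matrix, and denote its columns by $\Phi_1(y),\dots,\Phi_n(y)$. Expanding the defining determinant for $v_i(y)$ along its $i$-th column shows that $v_i(y)$ is precisely the $i$-th row of the adjugate matrix $\operatorname{adj}\Psi(y)$, i.e. $(v_i(y))_k = (\operatorname{adj}\Psi(y))_{ik}$. Taking the one-sided limits and subtracting, the jump is
\[
    G(y^+,y) - G(y^-,y) = T\sum_{i=1}^n \Phi_i(y)\otimes v_i(y),
\]
and entrywise
\[
    \Big(\sum_{i=1}^n \Phi_i(y)\otimes v_i(y)\Big)_{jk} = \sum_{i=1}^n \Psi(y)_{ji}(\operatorname{adj}\Psi(y))_{ik} = \big(\Psi(y)\operatorname{adj}\Psi(y)\big)_{jk} = \det\Psi(y)\,\delta_{jk}.
\]
Since $\tr(kJ+U_0(u)) = k\tr(J) = 0$, the Wronskian $\det\Psi(y) = T^{-1}$ is independent of $y$, so the sum equals $T^{-1}\operatorname{Id}$ and the jump equals $T\cdot T^{-1}\operatorname{Id} = \operatorname{Id}$, as required.

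The only genuine obstacle is bookkeeping: one must match the sign and cofactor conventions so that the duality defining $v_i(y)$ really yields the rows of $\operatorname{adj}\Psi(y)$ (equivalently, one fixes the pairing $\langle v_i(y),w\rangle$ to be the bilinear pairing under which the determinant is multilinear in $w$), and one must keep track of the opposite signs of the $x<y$ and $x>y$ branches so that the two partial sums combine into the full adjugate identity $\Psi\operatorname{adj}\Psi = \det(\Psi)\operatorname{Id}$. Once this is set up correctly, the jump condition follows from pure linear algebra, with no analysis beyond the decay and regularity already used in (i)--(ii).
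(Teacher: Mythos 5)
Your proposal is correct and takes essentially the same route as the paper: both verify that the columns of the candidate kernel lie in $\ker L$ away from the diagonal and decay in $L^2$, and then reduce the identity $L_xG(x,y)=\delta(x-y)\operatorname{Id}$ to the jump condition $G(y^+,y)-G(y^-,y)=\operatorname{Id}$, which is checked by recognizing the entries as cofactor expansions of the Wronskian $T^{-1}$. Your packaging of this last step as the adjugate identity $\Psi\operatorname{adj}\Psi=\det(\Psi)\operatorname{Id}$ is merely a uniform restatement of the paper's row-by-row Laplace expansion (it treats the diagonal entries and the vanishing off-diagonal entries in one stroke), not a genuinely different argument.
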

\begin{proof}
    Again we check easily that the columns of $G$ solve the equation $L_x G(x,y) = 0$ for all $x\neq y$ and are $L^2$ because they are exponentially decaying off $y$. Thus it remains to check the Jump condition \eqref{eq:jump}. To this end notice that by definition e.g.
    \begin{align*}
        &T^{-1}\langle e_1, (G(y^+,y) - G(y^-,y))e_1\rangle  \\
        &\qquad = \Phi_{11}^-\det(e_1|\Phi_2^-|\dots|\Phi_n^+) + \dots +\Phi_{1n}^+\det(\Phi_1^-|\dots|\Phi_{n-1}^+|e_1) = T^{-1},
    \end{align*}
    by expanding the determinant representation of $T^{-1}$ by the first row. In a similar manner we see that all other diagonal entries satisfy the jump condition.
\end{proof}

After all these preparations, we can calculate the functional derivative of the inverse transmission coefficient, giving Lemma \ref{lem:functionalderivative} the following generalization:

\begin{lemma}\label{lem:functionalderivativegeneral}
    The transmission coefficient corresponding to the operator $L = \partial - kJ -U_0(u)$ satisfies
    \begin{equation}\label{eq:functionalderivativegeneral}
        \frac{\delta}{\delta u_i} \log T^{-1} = \tr(\nabla_i U_0(u) \tilde g),
    \end{equation}
    where $\frac{d}{ds}|_{s=0} U_0(u+sve_i) = (\nabla_i U_0)(u)v$, $1 \leq i \leq m$.
\end{lemma}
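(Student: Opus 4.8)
\textbf{Proof proposal for Lemma \ref{lem:functionalderivativegeneral}.}

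The plan is to generalize the computation underlying Lemma \ref{lem:functionalderivative}, now using the compact representation of the Green's function from Lemma \ref{lem:formofgreensfunctiongeneral}. First I would differentiate the defining equation $\Phi_x = (kJ + U_0(u))\Phi$ for the full fundamental matrix $\Phi^-$. Writing $\dot{\Phi}^- = \frac{d}{ds}\big|_{s=0}\Phi^-(k,u+sve_i)$ and $\dot U = (\nabla_i U_0)(u)\,v$, the matrix $\dot{\Phi}^-$ solves the inhomogeneous linear system $\dot{\Phi}^-_x = (kJ+U_0)\dot{\Phi}^- + \dot U\,\Phi^-$ with $\dot{\Phi}^-(x) = 0$ for $x \ll 0$, so that by variation of constants $\dot{\Phi}^-(x) = \int_{-\infty}^x \Phi^+(x)(\Phi^+(y))^{-1}\dot U(y)\Phi^-(y)\,dy$, where integrability at $-\infty$ is guaranteed by the compact support hypothesis (3). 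Since $T^{-1} = \lim_{x\to\infty} e^{-\omega_1 kx}\Phi^-_{11}(x)$ as in Section \ref{sec:transmission}, and only the first $l$ exponentials survive in $\lim_{x\to\infty} \Phi^+(x)$ (those with $\real(\omega_j)<0$ decay, leaving the projection onto the right-Jost directions), the derivative of $\log T^{-1} = -\log T$ reduces to an integral over $\R$.

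The key step is to recognize that the matrix $(\Phi^+(y))^{-1}$, expanded via Jacobi's formula for the inverse using $\det\Phi^+ = $ const, produces exactly the cofactor vectors $v_j(y)$ appearing in Lemma \ref{lem:formofgreensfunctiongeneral}. Concretely, I would assemble the contributions so that the integrand becomes $\tr\big(\dot U(y)\,\tilde g(y)\big)$, where $\tilde g$ is the diagonal of the integral kernel of $R - R_0$. The bridge is that the restriction to $x = y$ of the Green's function given in Lemma \ref{lem:formofgreensfunctiongeneral} — more precisely the combination $\Phi^-_j(y)\otimes v_j(y)$ summed over left-Jost indices, which equals the $x<y$ branch evaluated on the diagonal — reproduces the structure $\Phi^+(x)(\Phi^+(y))^{-1}\big|_{x=y}$ weighted correctly by $T$. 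Pulling the scalar $T$ through $-\log T$ and using $\dot U = (\nabla_i U_0)(u)v$ linearly in $v$, duality against $v \in C_c^\infty$ identifies the functional derivative as $\tr\big(\nabla_i U_0(u)\,\tilde g\big)$.

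The main obstacle I anticipate is the bookkeeping in passing from the one-sided Jost picture (left solution at $+\infty$) to the two-sided Green's function diagonal, and in particular verifying that the off-diagonal structure of $U_0$ together with the trace in \eqref{eq:higherdimensionaltrace} makes only the genuinely relevant kernel entries contribute. One must confirm that the limit $\lim_{x\to\infty}e^{-\omega_1 kx}\Phi^+(x)$ yields the correct rank-one (or low-rank) projection and that the cofactor expansion of $(\Phi^+)^{-1}$ lines up, index by index, with the dual vectors $v_j$. I would handle this by writing everything in terms of the tensor-product representation of Lemma \ref{lem:formofgreensfunctiongeneral}, so that the trace $\tr(\dot U(y)\,\tilde g(y))$ emerges directly from contracting $\dot U(y)$ against $\Phi^-_j(y)\otimes v_j(y)$; the off-diagonality of $U_0$ then automatically selects precisely the off-diagonal entries of the diagonal Green's function, exactly as in the qdNLS special case of Lemma \ref{lem:functionalderivative} and Corollary (the off-diagonal $g_{ij}$).
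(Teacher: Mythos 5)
There is a genuine gap. Your whole reduction rests on the identity $T^{-1} = \lim_{x\to\infty} e^{-\omega_1 kx}\Phi^-_{11}(x)$, invoked ``as in Section \ref{sec:transmission}''. That formula (\eqref{def:Tlimit} in the paper) is special to the case $l=1$, i.e.\ exactly one eigenvalue of $J$ with positive real part, which is what happens for qdNLS ($n=3$, $l=1$). In the setting of Section \ref{sec:equality2} there are $l$ left Jost solutions, and for $x > \sup K$ one has instead
\begin{equation*}
    T^{-1} \;=\; \det\big(\Phi_1^-(x)|\dots|\Phi_l^-(x)\,|\,e^{\omega_{l+1}kx}e_{l+1}|\dots|e^{\omega_n kx}e_n\big)
    \;=\; e^{-(\omega_1+\dots+\omega_l)kx}\,\det\big[(\Phi^-_{ij}(x))_{1\le i,j\le l}\big],
\end{equation*}
an $l\times l$ minor of $\Phi^-$, not a single entry. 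So for $l\ge 2$ the step ``the derivative of $\log T^{-1}$ reduces to an integral over $\R$'' does not follow from your formula; you would have to differentiate an $l\times l$ determinant and redo all the cofactor bookkeeping, which is exactly the part of the argument you have not supplied. A second, smaller, problem: your claim that $\lim_{x\to\infty}e^{-\omega_1 kx}\Phi^+(x)$ is the projection onto the first direction needs the strict inequality $\real\omega_1 > \real\omega_j$ for $j\ge 2$, whereas the ordering \eqref{eq:ordering} allows ties, so this limit need not even exist.

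The paper's proof avoids spatial infinity altogether, and this is the route to take. Since the Wronskian $\det\boldsymbol{\Phi}(x)$, with $\boldsymbol{\Phi}=(\Phi_1^-|\dots|\Phi_l^-|\Phi_{l+1}^+|\dots|\Phi_n^+)$, equals $T^{-1}$ at \emph{every finite} $x$, one differentiates $\log\det\boldsymbol{\Phi}(x)$ at a fixed $x$ by Jacobi's formula, inserts $\dot{\boldsymbol{\Phi}}(x) = \int_\R G(x,y)\dot U(y)\boldsymbol{\Phi}(y)\,dy$ (the full two-sided Green's function, not the one-sided propagator $\Phi^+(x)(\Phi^+(y))^{-1}$), and cycles the trace to get, for every $x$ and a.e.\ $y$,
$\frac{\delta}{\delta u_i}\log T^{-1}(y) = \tr\big[\boldsymbol{\Phi}(y)\boldsymbol{\Phi}^{-1}(x)G(x,y)\nabla_i U_0\big]$.
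Then one splits $G = G_0 + (G-G_0)$: the $G-G_0$ part has a continuous diagonal and at $x=y$ gives exactly $\tr(\nabla_i U_0\,\tilde g)$, while the $G_0$ part vanishes as $x\to y$ because $G_0(x,y)\nabla_i U_0$ is trace free (diagonal times off-diagonal), so one may replace $\boldsymbol{\Phi}(y)$ by $\boldsymbol{\Phi}(y)-\boldsymbol{\Phi}(x)$ and conclude by continuity of $\boldsymbol{\Phi}$. This last point also repairs an imprecision in your write-up: $G$ itself jumps across $x=y$ (see \eqref{eq:jump}), so ``the $x<y$ branch evaluated on the diagonal'' is \emph{not} $\tilde g$; the two differ by the one-sided limit of $G_0$, a diagonal matrix, and it is only the off-diagonality of $\nabla_i U_0$ that makes this discrepancy invisible in the trace --- an argument that must be made explicitly, not assumed.
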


\begin{proof}
    Write $\dot U = \frac{d}{ds}\Big|_{s=0} U_0(u+sve_j)$, and correspondingly for $\Phi$. Then,
    \[
        L\dot\Phi = \dot U \Phi
    \]
    for the Jost solutions. Since we assumed $u \in C^\infty_c$, the right-hand side is in $L^2(\R)$ and we can unambiguously write
    \[
        \dot \Phi(x) = \int_\R G(x,y)\dot U(y)\Phi(y)\, dy.
    \]
    This formula holds both when $\Phi$ is a single Jost solution, or a matrix made of Jost solutions, that is
    \[
        \frac{d}{ds}\Big|_{s=0}(\Phi_1^-|\dots|\Phi_n^+) = \int_\R G(x,y)\dot U(y)(\Phi_1^-|\dots|\Phi_n^+)(y)\, dy.
    \]
    Thus, for all $x \in \R$, by making use of Jacobi's formula for the derivative of the determinant,
    \begin{align*}
        \frac{d}{ds}\Big|_{s=0} &\log\det\Big[(\Phi_1^-|\dots|\Phi_n^+)(x)\Big] \\
        &= \tr\Big[(\Phi_1^-|\dots|\Phi_n^+)^{-1}(x)\int_\R G(x,y)\dot U(y)(\Phi_1^-|\dots|\Phi_n^+)(y)\, dy\Big]\\
        &= \int_\R \tr\Big[(\Phi_1^-|\dots|\Phi_n^+)^{-1}(x)G(x,y)\dot U(y)(\Phi_1^-|\dots|\Phi_n^+)(y)\Big]\, dy\\
        &= \int_\R \tr\Big[(\Phi_1^-|\dots|\Phi_n^+)(y)(\Phi_1^-|\dots|\Phi_n^+)^{-1}(x)G(x,y)\dot U(y)\Big]\, dy,
    \end{align*}
    where in the last line we cycled the trace. Thus for all $x \in \R$ and a.e. $y \in \R$,
    \[
        \frac{\delta}{\delta u_i} \log T^{-1}(y) = \tr\Big[(\Phi_1^-|\dots|\Phi_n^+)(y)(\Phi_1^-|\dots|\Phi_n^+)^{-1}(x)G(x,y)(\nabla_i U_0)(u)\Big].
    \]
    We write $G = G_0 + G - G_0$ and note that setting $x = y$ in the summand with $G-G_0$ gives exactly the right-hand side of \eqref{eq:functionalderivativegeneral}. It remains to show that the summand with $G_0$ vanishes when $x \to y, x \neq y$. Indeed, because $G_0(x,y)\nabla_i U$ is a trace-free matrix,
    \begin{equation}\begin{split}\label{eq:conv}
        \Big|\tr\big[\boldsymbol{\Phi}(y)\boldsymbol{\Phi}^{-1}(x)G_0(x,y)\nabla_i U_0\big]\Big| &= \Big|\tr\big[(\boldsymbol{\Phi}(y)-\boldsymbol{\Phi}(x))\boldsymbol{\Phi}^{-1}(x)G_0(x,y)\nabla_i U_0\big]\Big|\\
        &\leq |\boldsymbol{\Phi}(y)-\boldsymbol{\Phi}(x)|\boldsymbol{\Phi}^{-1}(x)||G_0(x,y)||\nabla_i U_0|
    \end{split}\end{equation}
    where we introduced $\boldsymbol{\Phi} = (\Phi_1^-|\dots|\Phi_n^+)$. Now since we may write $\boldsymbol{\Phi}^{-1}$ in terms of its cofactor matrix and from $\det {\Phi} = T^{-1}$, we can estimate
    \[
        |\boldsymbol{\Phi}^{-1}(x)| \lesssim T |\boldsymbol{\Phi}(x)|^{n-1},
    \]
    and because $|G_0(x,y)| \leq e^{|x-y|}$, both sides in \eqref{eq:conv} vanish as $x \to y$ by continuity of $\boldsymbol{\Phi}$.
\end{proof}

An important case of Lemma \ref{lem:functionalderivativegeneral} is when $U_0$ is linear in $u$ and has the form
\[
    U_0(u) = u_1 A_1 + \dots + u_m A_m.
\]
In this case the derivative reduces to
\[
    \frac{\delta}{\delta u_i} \log T^{-1} = \tr(A_i \tilde g),
\]
which may be compared to Lemma \ref{lem:logdetderivative}. Concerning Lemma \ref{lem:logdetderivative} we find the following generalization to hold:

\begin{lemma}\label{lem:logdetderivativegeneral}
    Under the smallness assumption \eqref{eq:smallness3} the renormalized Fredholm determinant satisfies
    \begin{equation}\label{eq:logdetderivativegeneral}
        -\frac{\delta}{\delta u_i} \log {\det}_2(1 -  R_0U_0(u)) = \tr(\nabla_i U_0(u) \tilde g).
    \end{equation}
\end{lemma}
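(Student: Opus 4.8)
The goal is to compute the functional derivative of $\log{\det}_2(1-R_0U_0(u))$ and show it equals $\tr(\nabla_i U_0(u)\tilde g)$, matching Lemma \ref{lem:functionalderivativegeneral}. The plan is to start from the Jacobi/trace-series representation of the $2$-renormalized determinant. Writing $\Lambda = \sqrt{R_0}U_0(u)\sqrt{R_0}$ (so that $\log{\det}_2(1-R_0U_0) = \log{\det}_2(1+\Lambda')$ with the appropriate sign, using that ${\det}_2$ is unitarily/similarity invariant under the $\sqrt{R_0}$ conjugation that preserves Hilbert--Schmidt class), I would use the series
\[
    \log{\det}_2(1-R_0U_0(u)) = -\sum_{l=2}^\infty \frac{1}{l}\tr\big((R_0U_0(u))^l\big).
\]
Each summand is trace class for $l\geq 2$ under the smallness condition \eqref{eq:smallness3}, which is exactly what justifies term-by-term differentiation.

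**Key steps.** First I would differentiate the series termwise. Since $\frac{d}{ds}\big|_{s=0}U_0(u+sve_i) = v\,\nabla_iU_0(u)$, and using cyclicity of the trace, the derivative of the $l$-th term produces $l$ identical contributions, collapsing the combinatorial factor:
\[
    \frac{d}{ds}\Big|_{s=0}\tr\big((R_0U_0(u+sve_i))^l\big) = l\,\tr\big((R_0U_0(u))^{l-1}R_0\,(v\,\nabla_iU_0(u))\big).
\]
Summing over $l\geq 2$, the factor $l$ cancels the $1/l$, and I would resum the geometric-type series to obtain
\[
    \frac{d}{ds}\Big|_{s=0}\log{\det}_2(1-R_0U_0) = \tr\Big(\big[(1-R_0U_0)^{-1}R_0 - R_0\big]\,(v\,\nabla_iU_0)\Big),
\]
where the subtracted $-R_0$ term is precisely the $l=1$ term absent from the renormalized series. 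The key algebraic identity is then $(1-R_0U_0)^{-1}R_0 - R_0 = R - R_0$, since $R = L^{-1} = (\partial-kJ-U_0)^{-1} = (1-R_0U_0)^{-1}R_0$. Thus the operator sandwiched against $v\,\nabla_iU_0$ is exactly $R-R_0$, whose integral kernel restricted to the diagonal is $\tilde g$.

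**The main obstacle.** The delicate point is passing from the operator trace $\tr\big((R-R_0)\,(v\,\nabla_iU_0)\big)$ to the pointwise pairing $\int_\R v(x)\tr\big(\nabla_iU_0(u)(x)\,\tilde g(x)\big)\,dx$ identifying the functional derivative. This requires knowing that $R-R_0$ has a genuine integral kernel that is continuous on the diagonal $x=y$ — which is asserted in the paragraph preceding Lemma \ref{lem:formofgreensfunctiongeneral} (citing \cite[Proposition 3.1]{hgkv} and reconfirmed via the explicit Jost-solution form in Lemma \ref{lem:formofgreensfunctiongeneral}). Because $\nabla_iU_0(u)$ is a matrix of multiplication operators by polynomials in $u$ (with $u\in C_c^\infty$, hence smooth and compactly supported), multiplying by it and taking the trace localizes to the diagonal: the trace becomes $\int_\R \tr_{\C^n}\big((R-R_0)(x,x)\,\nabla_iU_0(u)(x)\big)v(x)\,dx$, where $(R-R_0)(x,x) = \tilde g(x)$. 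The care needed is to justify that the off-diagonal kernel contributes nothing and that the matrix trace $\tr_{\C^n}$ commutes correctly with the multiplication structure, using $\tr(\nabla_iU_0\,\tilde g) = \tr(\tilde g\,\nabla_iU_0)$. I expect the resummation bookkeeping to be routine given \eqref{eq:smallness3}, while the diagonal-restriction argument — ensuring the $-R_0$ renormalization removes exactly the singular $l=1$ contribution so that only the continuous kernel $\tilde g$ survives — is the conceptual heart, and mirrors the renormalization discussion emphasized in the introduction.
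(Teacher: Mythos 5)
Your proposal follows the same route as the paper's proof: differentiate the trace series for $\log{\det}_2(1-R_0U_0)$ term by term, use cyclicity of the trace so that the produced factor $l$ cancels the $1/l$, resum the geometric series $\sum_{l\geq 2}(R_0U_0)^{l-1}R_0=(1-R_0U_0)^{-1}R_0-R_0=R-R_0$, and then convert the operator trace against the multiplication operator $v\,\nabla_iU_0$ into the integral over the diagonal, where the continuous diagonal kernel $\tilde g$ of $R-R_0$ appears. This is exactly the paper's argument, which writes the last step as $\tilde g(x)=((R-R_0)\delta_x)(x)$.

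However, as written your computation carries a sign slip that flips the conclusion. Combining your own first two displays,
\[
\frac{d}{ds}\Big|_{s=0}\log{\det}_2\big(1-R_0U_0\big)
= -\sum_{l\geq 2}\tr\big((R_0U_0)^{l-1}R_0\,(v\,\nabla_iU_0)\big)
= -\tr\big((R-R_0)\,(v\,\nabla_iU_0)\big),
\]
since the overall minus sign in $\log{\det}_2(1-R_0U_0)=-\sum_{l\geq 2}\tfrac1l\tr\big((R_0U_0)^l\big)$ must be kept; your third display instead asserts this quantity equals $+\tr\big([(1-R_0U_0)^{-1}R_0-R_0]\,(v\,\nabla_iU_0)\big)$. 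Carried to the end, your version yields $\frac{\delta}{\delta u_i}\log{\det}_2(1-R_0U_0)=\tr(\nabla_iU_0(u)\,\tilde g)$, i.e.\ \eqref{eq:logdetderivativegeneral} \emph{without} the minus sign on the left-hand side, which is the opposite of the lemma and would also destroy the matching with Lemma \ref{lem:functionalderivativegeneral} on which Theorem \ref{thm:main2} rests. This is bookkeeping rather than a conceptual gap: the paper sidesteps it by computing $-\frac{d}{ds}\log{\det}_2(1-R_0U_0)=\frac{d}{ds}\sum_{l\geq 2}\frac1l\tr\big((R_0U_0)^l\big)$ from the start, and once you restore the minus sign your argument, including the trace-class justification under \eqref{eq:smallness3} and the diagonal-restriction step, coincides with the paper's proof.
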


\begin{proof} 
    Note that we can write $\tilde g(x) = ((R-R_0)\delta_x)(x)$. Thus we calculate with $\hat v = ve_i$, $e_i$ being the $i$th unit vector,
    \begin{align*}
        LHS\eqref{eq:logdetderivativegeneral} &= -\frac{d}{ds}\Big|_{s=0} \log {\det}_2(1 - R_0U_0(u+s\hat v)) \\
        &= \frac{d}{ds}\Big|_{s=0} \sum_{l=2}^\infty \frac{1}{l} \tr(R_0U_0(u+s\hat v))^l\\
        &= \sum_{l=2}^\infty \tr\big[(R_0U_0(u))^{l-1}R_0 \nabla_i U(u)v\big]\\
        &= \tr\Big[\sum_{l=2}^\infty \nabla_i U_0(u)v(R_0U_0(u))^{l-1}R_0\Big]\\
        &= \int \tr\Big[v(x)\nabla_i U_0(u(x))\Big(\sum_{l=2}^\infty (R_0U_0(u))^{l-1}R_0 \delta_x\Big)(x)\Big]\,dx\\
        &= \int v(x)\tr\big[\nabla_i U_0(u(x))((R-R_0)\delta_x)(x))\big]\,dx\\
        &= \int v(x)\tr[\nabla_i U_0(u(x)) \tilde g(x)]\, dx,
    \end{align*}
    where used the definition of the renormalized Fredholm determinant, calculated the derivative explicitly, cycled the trace, used the definition of the trace as the integral over the diagonal of the kernel, and observed the appearence of $R-R_0$ in the integral.
\end{proof}

By combining Lemma \ref{lem:functionalderivativegeneral} with Lemma \ref{lem:logdetderivativegeneral} as well as with the fact that
\[
    T^{-1}(k,0) = 1 = {\det}_2(1)
\]
we arrive at our second main Theorem:

\begin{theorem}\label{thm:main2}
    Under the assumption of the beginning of Section \ref{sec:equality2} and under the smallness assumption \eqref{eq:smallness3}, we have equality
    \begin{equation}
       T^{-1}(k,u) = {\det}_2\big(1-(\partial-kJ)^{-1}U_0(u)\big),
    \end{equation}
    of inverse transmission coefficient and renormalized Fredholm determinant.
\end{theorem}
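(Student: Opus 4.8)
The plan is to prove the identity at the level of logarithms. Both $\log T^{-1}(k,u)$ and $\log {\det}_2(1-R_0U_0(u))$ are smooth functionals of the vector $u=(u_1,\dots,u_m)$ that vanish at $u=0$, so it suffices to show that they have the same partial functional derivative in each $u_i$. Once this is established, integrating the common derivative back along the radial path $s\mapsto su$, $s\in[0,1]$, and invoking the normalization at the origin forces the two functionals to coincide; exponentiating then gives $T^{-1}(k,u)={\det}_2(1-(\partial-kJ)^{-1}U_0(u))$.

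The key input is that both functional derivatives are governed by one and the same object, namely the diagonal $\tilde g$ of the integral kernel of $R-R_0$. Indeed, Lemma \ref{lem:functionalderivativegeneral} computes $\frac{\delta}{\delta u_i}\log T^{-1}$ in terms of $\tr(\nabla_i U_0(u)\tilde g)$, while Lemma \ref{lem:logdetderivativegeneral} evaluates $\frac{\delta}{\delta u_i}\log{\det}_2(1-R_0U_0(u))$ through the very same trace $\tr(\nabla_i U_0(u)\tilde g)$, with the identical diagonal Green's function $\tilde g$ entering on both sides. Thus the two functionals share all their first functional derivatives. I would emphasize that this common appearance of $\tilde g$ is exactly the mechanism that replaces the comparison of zeros of holomorphic functions used in \cite{traceideals}: instead of matching the two functions directly — problematic in the general setting \eqref{eq:generalfirstorder} where the zeros need not be simple — one matches their infinitesimal variations, which the diagonal Green's function produces on both sides simultaneously.

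With this in hand I would set $F(s)=\log T^{-1}(k,su)-\log {\det}_2(1-R_0U_0(su))$ for $s\in[0,1]$. By the chain rule for functional derivatives,
\[
    F'(s) = \sum_{i=1}^m \int_\R \left(\frac{\delta \log T^{-1}}{\delta u_i} - \frac{\delta \log {\det}_2(1-R_0U_0)}{\delta u_i}\right)(su)\, u_i(x)\, dx,
\]
which vanishes identically by the previous step. Hence $F$ is constant on $[0,1]$. Since $T^{-1}(k,0)=\det(\mathrm{Id})=1$ and ${\det}_2(1)=1$ we have $F(0)=0$, so $F(1)=0$, which is precisely the claimed equality.

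The main obstacle I anticipate is not the algebra but the domain on which this path argument is legitimate. Because $U_0$ is a polynomial in $u$ with no zeroth order term, the map $u\mapsto su$ does not merely rescale $U_0$ (the degree-$d$ monomials rescale by $s^d$), so the smallness condition \eqref{eq:smallness3} imposed at $s=1$ need not persist along the whole segment, and ${\det}_2$ together with the series \eqref{eq:absolutelyconvergentsum} defining $R-R_0$ could fail to be defined there. I would resolve this either by choosing $k$ large enough that $\|U_0(su)\|_{H^s}\lesssim k^{\frac12+s}$ for every $s\in[0,1]$, or by first establishing the identity on a neighborhood of $0$, where the radial path stays inside the region of convergence, and then propagating it across the connected smallness region by real-analyticity of both sides in $u$. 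A secondary point to verify is the $C^1$ regularity needed to differentiate under the integral, and the continuity of $\tilde g$ at $x=y$ used in Lemma \ref{lem:functionalderivativegeneral}; these rest on the Hilbert–Schmidt bounds of Section \ref{sec:equality1} and the argument of \cite[Proposition 3.1]{hgkv}, but they are what turns the formal manipulation into a rigorous proof.
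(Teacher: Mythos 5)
Your proposal is essentially the paper's own proof: Theorem \ref{thm:main2} is obtained there precisely by combining Lemma \ref{lem:functionalderivativegeneral} with Lemma \ref{lem:logdetderivativegeneral} and the normalization $T^{-1}(k,0) = 1 = {\det}_2(1)$, and your radial-path integration together with the caveat about preserving \eqref{eq:smallness3} along the path (large $k$, or analytic continuation) only makes explicit what the paper leaves implicit. One bookkeeping warning: as literally stated the two lemmas give \emph{opposite} signs, $\frac{\delta}{\delta u_i}\log T^{-1} = +\tr(\nabla_i U_0(u)\tilde g)$ versus $\frac{\delta}{\delta u_i}\log{\det}_2(1-R_0U_0(u)) = -\tr(\nabla_i U_0(u)\tilde g)$, which taken at face value would yield $T^{-1}\cdot{\det}_2 = 1$ rather than the claimed equality; this is an inconsistency internal to the paper (the explicit $3\times 3$ computations of Lemma \ref{lem:functionalderivative}, the Corollary on the $g_{ij}$, and Theorem \ref{thm:equality} show that the sign in Lemma \ref{lem:functionalderivativegeneral} should be $-$), so when you assert that ``the very same trace'' appears on both sides you are inheriting, not introducing, this slip, and a careful write-up should fix the sign before integrating.
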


\section{Open questions}\label{sec:questions}

As is usual in mathematics, answering one question rises at least ten new questions. Some of them are just not clear to the author and may have been already treated elsewhere, some may be of general interest. A few of them which were not answered in this work are listed here. If one of the readers has an answer to one of the questions, the author would be grateful if he or she let him know. 

\begin{enumerate}
    \item Is it also possible to treat $U_0(u)$ which has differential polynomials as off-diagonal entries in Theorem \ref{thm:main2}? Having $\nabla_i U_0(u)$ contain differential operators as entries causes problems in Lemma \ref{lem:functionalderivativegeneral} and Lemma \ref{lem:logdetderivativegeneral}. Having $U_0(u)$ being also dependent on $k$ seems to be off no issue, thus one could hope then to analyze the good Boussinesq equation using the $3\times3$ Lax pair from \cite[Section 3]{gb}
    \item Is it possible to give a general formula for the density function of $\log {\det}_2(1 - (\partial-kJ)^{-1}U_0(u))$ similar to \cite{hgkv}? If so, can it be used to show conservation?
    \item Is \eqref{eq:qdNLS} amenable to the method of commuting flows (see \cite{hgknv})?
    \item Is it possible to construct weak solutions in negative regularity to the good Boussinesq equation by using the Miura map of \cite{charlierlenells}, arguing similar to \cite{kkl}? The Miura map is defined as follows: Let
    \[
        u = - \frac92 |q|^2 + 3\real (\omega q_x).
    \]
    Then if $q$ solves
    \[
        iq_t - \frac1{\sqrt{3}}q_{xx} + 2\sqrt3\bar q \bar q_x = 0,
    \]
    $u$ gives rise to a solution of
    \[
        u_{tt} + \frac13u_{xxxx} + \frac43 (u^2)_{xx} = 0.
    \]
    \item Can one also invert the Miura map of \cite{charlierlenells} by modifying it similar to \cite{kkl}?
    \item Is this Miura map related to a factorization of the third order one-dimensional Lax operator of good Boussinesq similar to what happens for the Miura map relating KdV and mKdV resp. Gardner (see \cite{kkl})? This may be related to the Drinfeld-Sokolov reduction.
    \item The vector and matrix NLS equations are examples of equations with higher dimensional Lax pairs but seem to fail the condition (1) from the beginning of Section \ref{sec:equality2}. Can one still obtain results for these equations?
    \item We have now seen three different one-dimensional Schrödinger equations which are integrable: cubic NLS, cubic dNLS and the quadratic dNLS \eqref{eq:qdNLS}. If the author is not mistaken, it seems that given a dNLS with two derivatives on the cubic term, one can modify the proof of conservation of the determinant in \cite{klaus} to still hold. This would yield another integrable nonlinear Schrödinger equation. Maybe it is also possible to construct a hierarchy related to the one of \eqref{eq:qdNLS} in the same way as the Kaup-Newell hierarchy is related to the AKNS hierarchy. Is there a general theory to construct integrable nonlinear Schrödinger equations arising from $N \times N$ Lax pairs?
    \item Even further, is there maybe a way to construct one-dimensional integrable PDE as reductions of systems integrable by $N \times N$ pairs?
\end{enumerate}

\newpage

\bibliographystyle{plain}

\end{document}